\journal{Journal of \LaTeX\ Templates}
\newcommand{\argmax}{\mathrm{argmax}}
\newcommand{\al}{\alpha}
\newcommand{\sig}{\sigma}
\newcommand{\gam}{\gamma}
\newcommand{\Gam}{\Gamma}
\newcommand{\mbbh}{\mathbb{H}}
\newcommand{\mbbr}{\mathbb{R}}
\newcommand{\mbbrp}{\mathbb{R}_+}
\newcommand{\mbby}{\mathbb{Y}}
\newcommand{\D}{\Delta}
\newcommand{\del}{\delta}
\newcommand{\ep}{\epsilon}
\newcommand{\mca}{\mathcal{A}}
\newcommand{\mcc}{\mathcal{C}}
\newcommand{\mcf}{\mathcal{F}}
\newcommand{\mci}{\mathcal{I}}
\newcommand{\mcl}{\mathcal{L}}
\newcommand{\sumj}{\sum_{j=1}^{n}}
\newcommand{\E}{\mathbb{E}}
\newcommand{\pr}{\mathbb{P}}
\newcommand{\var}{\mathrm{var}}
\newcommand{\p}{\partial}
\newcommand{\cip}{\stackrel{p}{\to}}
\newcommand{\cil}{\stackrel{\mathcal{L}}{\to}}
\newcommand{\nn}{\nonumber}
\newcommand{\tz}{\theta_0}
\newcommand{\tes}{\hat{\theta}_n}
\newcommand{\aes}{\hat{\al}_n}
\newcommand{\bes}{\hat{\beta}_n}
\newcommand{\ges}{\hat{\gam}_n}
\newcommand{\res}{\hat{\rho}_n}
\newcommand{\tesz}{\hat{\theta}_{0,n}}
\newcommand{\aesz}{\hat{\al}_{0,n}}
\newcommand{\besz}{\hat{\beta}_{0,n}}
\newcommand{\gesz}{\hat{\gam}_{0,n}}
\newcommand{\resz}{\hat{\rho}_{0,n}}
\newcommand{\desz}{\hat{\del}_{0,n}}
\def\ds#1{\displaystyle{#1}}
\def\tcr#1{\textcolor{black}{#1}}
\theoremstyle{default} 
\newtheorem{theorem}{Theorem}[section]
\newtheorem{lemma}[theorem]{Lemma}
\newtheorem{remark}[theorem]{Remark}
\numberwithin{equation}{section}
\begin{document}

\begin{frontmatter}

\title{Estimation of ergodic square-root diffusion under high-frequency sampling}

\author{Yuzhong Cheng}
\address{Graduate School of Mathematics, Kyushu University, Japan}
%
\author{Nicole Hufnagel}
\address{Faculty of Mathematics, TU Dortmund University, Germany}
%
\author{Hiroki Masuda}
\address{Faculty of Mathematics, Kyushu University, 744 Motooka Nishi-ku Fukuoka 819-0395, Japan}
\begin{abstract}
Gaussian quasi-likelihood estimation of the parameter $\theta$ in the square-root diffusion process is studied under high frequency sampling.
Different from the previous study of Overbeck and Ryd\'{e}n(1998) under low-frequency sampling, high-frequency of data provides very simple form of the asymptotic covariance matrix. Through easy-to-compute preliminary contrast functions, a practical two-stage manner without numerical optimization is formulated in order to conduct not only an asymptotically efficient estimation of the drift parameters, but also high-precision estimator of the diffusion parameter. Simulation experiments are given to illustrate the results.
\end{abstract}

\begin{keyword}
CIR process\sep Parameter estimation\sep Gaussian quasi-likelihood \sep High frequency data
\end{keyword}

\end{frontmatter}


\section{Introduction}

\subsection{Objective}

Suppose that $X=X^\theta$ is the square-root diffusion process taking values in $(0,\infty)$:
\begin{equation}
	dX_t = (\al-\beta X_t)dt + \sqrt{\gam X_t}dw_t,
	\label{hm:cir}
\end{equation}
where $X_0>0$ a.s., independent of the standard Wiener process $w$.
The process $X$ is also known as the Cox-Ingersoll-Ross (CIR) model; see, among others, \cite{cir1985} and \cite{GoiYor03}.
We are concerned here with asymptotically efficient estimation of the parameter
\begin{equation}
	\theta:=(\al,\beta,\gam) \in \Theta \subset (0,\infty)^3,
	\nonumber
\end{equation}
when $X$ is observed at $t_j=j h$ where $h=h_n\to 0$ while $T_n:=nh\to \infty$ as $n\to\infty$;
no further condition on the rate of $h\to 0$ is imposed, and the equidistant assumption is just for simplicity.

Historically, parametric estimation of the model \eqref{hm:cir} was studied by:
\begin{itemize}
	\item \cite{Ove98}, \cite{AlaKeb12}, and \cite{AlaKeb13}, when $X$ is \textit{continuously} observed, where they considered asymptotic behaviors of the maximum-likelihood estimation;
	\item \cite{OveRyd97}, when $X$ is observed \textit{at low-frequency}, where the sampling step size $h>0$ is fixed, and they considered not only the moment-matching type estimators, but also the local asymptotic normality.
\end{itemize}
Despite of popularity in applications of the model, however, 
parameter estimation issue has not been fully addressed in case of \textit{high-frequency} sampling, which provides us with quantitative effect of sampling frequency for each parameters, together with simpler form of Fisher-information matrix.
The existing literature, which includes \cite{Kes97} and \cite{Yos11} with the references therein, are mostly concerned with the uniformly ellipticity, which does not hold in \eqref{hm:cir}; even when the diffusion coefficient is not uniformly elliptic, it is quite often assumed that the inverse of the diffusion coefficient can be bounded by a constant multiple of the function $1+|x|^C$ for some $C>0$, hence is not bounded below.

Still, we should mention the previous work \cite{AlaKebTra20}, where the authors deduced the local asymptotic normality (LAN) for $(\al,\beta)$ when the diffusion parameter $\gam$ is assumed to be known; \cite{AlaKebTra20} also considered the cases of $\beta=0$ and $\beta<0$ (the associated statistical experiments are LAQ and LAMN, respectively).
It is well-known \cite{cir1985} that the CIR model has a noncentral chi-squared transition density, hence far from being Gaussian.
Nevertheless, having the LAN result of \cite{AlaKebTra20} in hand, we will see that the Gaussian quasi-likelihood function (GQLF), which is constructed through the small-time Gaussian approximation of the true transition density, is asymptotically efficient especially for estimation of the drift parameters. Even better, the GQLF enables us to effectively bypass numerical optimization.

\medskip

The rest of this paper is organized as follows.
After describing some preliminary facts, in Section \ref{sec_GQLA} we will look at the asymptotic behavior of the Gaussian maximum quasi-likelihood estimator, together with an explicit initial estimator.
Section \ref{nh:sec_simulations} presents some simulation results.

\subsection{Preliminaries}
\label{sec_preliminaries}

Let us describe the basic setup imposed throughout this paper.
Denote by $(\Omega,\mcf,(\mcf_t)_{t\ge 0},\pr)$ the underlying filtered probability space.
It is known that \eqref{hm:cir} admits a unique strong solution, hence for our purpose we may and do suppose that $\mcf_t=\sig(X_0) \vee \sig(w_s:\,s\le t)$.

The zero boundary is non-attracting if $2\al>\gam$, so that $X$ stays positive with probability one, see \cite{cir1985}.
We assume the stronger assumption that the parameter space $\Theta$ is a bounded convex domain, whose compact closure satisfies that
\begin{equation}
	\overline{\Theta}\subset \left\{(\al,\beta,\gam)\in(0,\infty)^3:\, 2\al > 5\gam \right\}.
	\label{hm:param.assumption}
\end{equation}
We will denote by $\tz=(\al_0,\beta_0,\gam_0)\in\Theta$ the true value of $\theta$, and write $\pr_\theta$ for the distribution of $X$ associated with the value $\theta$, with simply writing $\pr$ for $\pr_{\tz}$, which may cause no confusion.
The expectation with respect to $\pr$ will be denoted by $\E$.

Under $\pr_\theta$, $X$ admits the gamma invariant distribution with shape parameter $2\al/\gam$ and scale one $2\beta/\gam$.
We denote the invariant distribution under $\pr_\theta$ by $\pi_\theta(dx)$:
\begin{equation}
	\pi_\theta(dx)=\frac{(2\beta/\gam)^{2\al/\gam}}{\Gam(2\al/\gam)}x^{(2\al/\gam)-1}e^{-(2\beta/\gam)x}I_{(0,\infty)}(x)dx,
	\nonumber
\end{equation}
where $I_A$ denotes the indicator function of a set $A$; we will simply write $\pi_0(dx)=\pi_{\tz}(dx)$.
The $q$th moment of $\pi_\theta$ is given by
\begin{equation}
	\int_0^\infty x^q \pi_\theta(dx)=\frac{\Gam\left(q+(2\al/\gam)\right)}{(2\beta/\gam)^{q}\Gam(2\al/\gam)},
	\nonumber
\end{equation}
which is finite if and only if $q>-2\al/\gam$; in particular,
\begin{align*}
	&\int\left(\frac{1}{\gam_0x}\right)\pi_{0}(dx) = \frac{1}{\gam_0}\frac{2\beta_0}{2\alpha_0-\gam_0},
	\\
	&\int\left(\frac{x}{\gam_0}\right)\pi_{0}(dx) = \frac{1}{\gam_0}\frac{\alpha_0}{\beta_0}.
\end{align*}
Though not essential, we focus on the stationary case throughout this paper, that is, the initial distribution $\mcl(X_0)=\pi_0$ under the true distribution.
Since $X$ is (exponentially) strong-mixing by \cite[Corollary 2.1]{GenJeaLar00}, we then have the ergodic theorem:
\begin{equation}
	\frac{1}{T}\int_0^T f(X_t)dt \cip \tcr{\int f(x) \pi_{0}(dx)}, \qquad T\to\infty,
	\label{hm:conti.LLN}
\end{equation}
for any measurable function $f\in L^1(\pi_0)$, where $\cip$ denotes the convergence in $\pr$-probability.

For later reference, we state the basic tools from \cite[Propositions 3, 4, and 5]{AlaKeb13} and \cite[Section 3]{AlaKebTra20}.

\begin{lemma}
	\label{hm:lem_AKT}
	\begin{enumerate}
		\item For each $p<\frac{2\alpha}{\gamma}$,
		\begin{equation}
			\sup_{t\in\mbbrp}\E\left(X_t^{-p}\right) < \infty.
			\nonumber
		\end{equation}
		
		\item For each $p\ge 1$, there exists a constant $C_p>0$ such that
		\begin{equation}
			\sup_{s,t\in\mbbrp:\,0<|t-s|<1}\E\left(|X_t-X_s|^{p}\right) \le C_p|t-s|^{p/2}.
			\nonumber
		\end{equation}
		
		\item For each $p<\frac12\left(\frac{2\alpha}{\gamma}-1\right)$,
		\begin{equation}
			\frac1n \sumj X_{t_{j-1}}^{-p} \cip \int_{0}^{\infty}x^{-p}\pi_{0}(dx)
			=\frac{(2\beta/\gam)^{p} \Gam((2\al/\gam) -p)}{\Gam(2\al/\gam)}.
			\label{hm:LLN}
		\end{equation}
		
	\end{enumerate}
\end{lemma}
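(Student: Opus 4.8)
All three statements are essentially quoted from the references named above; here is the route I would take to re-derive them under the stationarity assumed throughout. For (i), the quickest argument is that in the stationary regime $X_t$ has law $\pi_\theta$ for every $t$, so that $\E(X_t^{-p})=\int_0^\infty x^{-p}\pi_\theta(dx)$ does not depend on $t$ and, by the $q$th-moment formula for $\pi_\theta$ already recorded, is finite precisely when $p<2\al/\gam$; the same observation gives $\sup_{t}\E(X_t^q)<\infty$ for every $q>0$, which I will use in (ii) and (iii). For a general (non-stationary) initial law I would instead apply It\^o's formula to $X^{-p}$ stopped on $\{X\le\ep\}$, noting that the generator sends $x^{-p}$ to $p\beta x^{-p}+p\bigl(\tfrac12\gam(p+1)-\al\bigr)x^{-p-1}$, and let $\ep\downarrow0$ by Fatou's lemma; the point worth flagging is that this crude estimate only reaches $p<2\al/\gam-1$, and the sharp threshold $p<2\al/\gam$ additionally requires the explicit noncentral $\chi^2$ transition density.

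For (ii), fix $0<s<t$ with $t-s<1$ and first treat $p\ge 2$. I would write $X_t-X_s$ as the drift integral $\int_s^t(\al-\beta X_u)\diff u$ plus the stochastic integral $\int_s^t\sqrt{\gam X_u}\diff w_u$, bound $\E|X_t-X_s|^p$ by $2^{p-1}$ times the sum of the $p$th absolute moments of the two integrals, apply Jensen's inequality to the drift part (obtaining $(t-s)^{p-1}\int_s^t\E|\al-\beta X_u|^p\diff u\le C(t-s)^p$) and the Burkholder--Davis--Gundy inequality followed by Jensen to the martingale part (obtaining $C_p(t-s)^{p/2-1}\int_s^t\E(\gam X_u)^{p/2}\diff u\le C(t-s)^{p/2}$), and then use finiteness of the positive moments from (i) together with $(t-s)^p\le(t-s)^{p/2}$. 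This gives the bound for $p\ge 2$; for $1\le p<2$ it follows from the case $p=2$ by Jensen's inequality.

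For (iii), I would reduce to the continuous ergodic theorem: since $t_j-t_{j-1}=h_n$, we have $\tfrac1n\sumj X_{t_{j-1}}^{-p}=\tfrac1{T_n}\int_0^{T_n}X_{\eta(t)}^{-p}\diff t$ with $\eta(t):=t_{j-1}$ on $[t_{j-1},t_j)$, which I split as $\tfrac1{T_n}\int_0^{T_n}X_t^{-p}\diff t+R_n$. Because $p<\tfrac12(2\al/\gam-1)<2\al/\gam$, the function $x^{-p}$ lies in $L^1(\pi_0)$, so the first term converges in $\pr$-probability to $\int_0^\infty x^{-p}\pi_0(dx)$ by \eqref{hm:conti.LLN}. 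For $R_n$ I would show $\E|R_n|\to0$: from $|x^{-p}-y^{-p}|\le p(x\wedge y)^{-(p+1)}|x-y|$, H\"older's inequality with a conjugate pair $(q,q')$, the bound $(X_t\wedge X_s)^{-(p+1)q'}\le X_t^{-(p+1)q'}+X_s^{-(p+1)q'}$, and (i)--(ii), one obtains $\E|X_t^{-p}-X_s^{-p}|\le C(t-s)^{1/2}$ for $0<s<t<s+1$, provided $q'>1$ is taken close enough to $1$ that $(p+1)q'<2\al/\gam$ --- possible since $p<2\al/\gam-1$ under \eqref{hm:param.assumption}. Applying this with $s=t_{j-1}$ yields $\E|R_n|\le C h_n^{1/2}\to0$, hence $R_n\cip0$, and adding the two limits gives \eqref{hm:LLN}.

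The only genuinely delicate point is (i) in the absence of stationarity, where the naive generator estimate stalls at $p<2\al/\gam-1$ and the optimal range requires the transition density --- but under the standing stationarity assumption this evaporates. After that the substance is the time-modulus moment bound (ii); (iii) is then mostly bookkeeping, the one thing to monitor being that every negative moment invoked has exponent strictly below $2\al/\gam$, for which the hypothesis $p<\tfrac12(2\al/\gam-1)$ together with \eqref{hm:param.assumption} (which forces $2\al/\gam>5$) leaves wide margin.
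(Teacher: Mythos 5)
Your derivation is correct, but it is worth noting that the paper itself offers no proof of this lemma: it simply imports the three statements from Ben Alaya--Kebaier (Propositions 3--5) and Alaya--Kebaier--Tran (Section 3), which establish them for general (not necessarily stationary) initial conditions. Your route instead leans on the standing stationarity assumption, and under that assumption everything you write checks out: (i) collapses to the moment formula for the gamma law $\pi_\theta$ (and your side remark is accurate --- the generator/Fatou argument stalls at $p<2\al/\gam-1$, the sharp range for a general initial law needing the noncentral $\chi^2$ density); (ii) is the standard drift-plus-BDG decomposition, with all positive moments supplied by stationarity; and (iii) via the Riemann-sum comparison with $\frac{1}{T_n}\int_0^{T_n}X_t^{-p}\,dt$ and the H\"older estimate $\E|X_t^{-p}-X_s^{-p}|\lesssim |t-s|^{1/2}$ is sound --- indeed your argument only requires $(p+1)q'<2\al/\gam$ for some $q'>1$, so it proves (iii) on the wider range $p<2\al/\gam-1$ rather than the stated $p<\frac12(2\al/\gam-1)$, the narrower threshold being an artifact of the references' non-stationary setting. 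What the citation buys that your argument does not is robustness to the initial distribution; what your argument buys is self-containedness and, for (iii), a sharper range under the hypotheses actually in force in this paper.
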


By Lemma \ref{hm:lem_AKT} with \eqref{hm:conti.LLN} it is routine to deduce that
\begin{equation}
	\frac1n \sumj f(X_{t_{j-1}}) \cip \int_{0}^{\infty}f(x)\pi_{0}(dx)
	\nn
\end{equation}
for each $\mcc^1$-function with $f(x)$ and $\p_x f(x)$ being of at most polynomial growth for $|x|\to\infty$.
We also have $(2\al/\gam -1)/2 > 2$ under \eqref{hm:param.assumption}, hence in particular \eqref{hm:LLN} holds true for $p=1,2$.

Applying the integration by parts we have
\begin{equation}
	X_t = e^{-\beta(t-s)}X_s + \frac{\al}{\beta}(1-e^{-\beta(t-s)}) 
	+ \sqrt{\gam} e^{-\beta t} \tcr{\int_{s}^{t}e^{u\beta}\sqrt{X_u}dw_u}, \quad t>s.
	\label{hm:X_sol}
\end{equation}
Let the symbol $\E_\theta^{j-1}(\cdot)$ stand for the conditional expectation under $\pr_{\theta}$ with respect to $\mcf_{t_{j-1}}$.
We have $\E_\theta^{j-1}(\int_{t_{j-1}}^{t_{j}} e^{\beta s} \sqrt{X_s} dw_s)=0$ a.s. since
\begin{align}
	\E_\theta^{j-1}\left(\left\langle\int_{t_{j-1}}^{\cdot} e^{\beta s} \sqrt{X_s} dw_s\right\rangle_{t_{j}}\right)
	&=\E_\theta^{j-1}\left(\int_{t_{j-1}}^{t_{j}} e^{2\beta s} X_s ds\right) \nn\\
	&=\int_{t_{j-1}}^{t_{j}} e^{2\beta s} \E_\theta^{j-1}(X_s) ds
	< \infty\quad \text{a.s.}
	\nonumber
\end{align}
by the stationarity.
This together with \eqref{hm:X_sol} leads to the explicit expressions of the conditional mean and the conditional variance:
\begin{align}
	\mu_{j-1}(\al,\beta)&:=\E^{j-1}_\theta(X_{t_{j}}) \nn\\
	&= e^{-\beta h}X_{t_{j-1}} + \frac{\al}{\beta}(1-e^{-\beta h}),
	\label{hm:cond.mean}
	\\
	\sig^2_{j-1}(\theta) &:=\var^{j-1}_\theta(X_{t_{j}}) \nn\\
	&= \gam e^{-2\beta t_j}\E_\theta^{j-1}\left( \int_{t_{j-1}}^{t_j}e^{2s\beta} X_s ds \right)
	\nn\\
	&= \gam e^{-2\beta t_j} \int_{t_{j-1}}^{t_j}\left\{ e^{2s\beta}\left(
	e^{-\beta(s-t_{j-1})}X_{t_{j-1}} + \frac{\al}{\beta}(1-e^{-\beta(s-t_{j-1})})\right)\right\} ds
	\nn\\
	&= \frac{\gam}{\beta}(1-e^{-\beta h}) \left( 
	e^{-\beta h} X_{t_{j-1}} +\frac{\al}{2\beta}(1-e^{-\beta h}) \right).
	\label{hm:cond.var}
\end{align}
Note that the conditional mean is free from the diffusion parameter $\gam$.

%

\section{Gaussian quasi-likelihood}
\label{sec_GQLA}

We denote by $\mbbh_n(\theta)$ the GQLF, which is defined through approximating the conditional distribution $\mcl(X_{t_j}|X_{t_{j-1}})$ under $\pr_\theta$ by $N(\mu_{j-1}(\theta), \sig^2_{j-1}(\theta))$ with the expressions \eqref{hm:cond.mean} and \eqref{hm:cond.var}:
\begin{align}
	\mbbh_n(\theta) &:= \sumj \log \phi\left( X_{t_j};\, \mu_{j-1}(\al,\beta),\, \sig^2_{j-1}(\theta)\right), \nonumber\\
	&= C_n -\frac12 \sumj \left(
	\log\sig^2_{j-1}(\theta) + \frac{1}{\sig^2_{j-1}(\theta)}(X_{t_j} - \mu_{j-1}(\al,\beta))^2 \right) \nn\\
	&=C_n-\frac12 \sumj \log\left\{\frac{\gam}{\beta}(1-e^{-\beta h}) \left( 
	e^{-\beta h} X_{t_{j-1}} +\frac{\al}{2\beta}(1-e^{-\beta h}) \right)\right\}
	\nn\\
	&\qquad{}-\frac12 \sumj \frac{\left(X_{t_j} - e^{-\beta h}X_{t_{j-1}} - \frac{\al}{\beta}(1-e^{-\beta h})\right)^2}{\frac{\gam}{\beta}(1-e^{-\beta h}) \left( 
		e^{-\beta h} X_{t_{j-1}} +\frac{\al}{2\beta}(1-e^{-\beta h}) \right)},
	\label{hm:def_GQLF}
\end{align}
where $C_n$ is a constant which does not depend on $\theta$, hence irrelevant to optimization, 
and where $\phi(\cdot;\mu,\sig^2)$ denotes the Gaussian density with mean $\mu$ and variance $\sig^2$.
Then, we define the Gaussian quasi-maximum likelihood estimator (GQMLE) by any
\begin{equation}
	\tes=(\aes,\bes,\ges) \in \mathop{\mathrm{argmax}}_{\overline{\Theta}}\mbbh_n.
	\label{hm:def_GQMLE}
\end{equation}
It is well-known that the GQLF effectively works for uniformly elliptic diffusions with coefficients smooth enough (see \cite{Kes97}, \cite{Yos11}). As for the present model \eqref{hm:cir}, we need to take care of the irregularity of the diffusion coefficient when proving moment estimates and basic limit theorems.

The GQMLE $\tes$ cannot be given in a closed form, because of its nonlinearity in the parameters.
Nevertheless, as in \cite[Sect 3]{OveRyd97} we can proceed with initial estimator which we will denote by $\tesz=(\aesz,\besz,\gesz)$. The asymptotics is rather similar \cite{OveRyd97}, while we need to take care about some moment estimates in the present high-frequency setup.

Let
\begin{align}
	D_n := \mathrm{diag}\big(\sqrt{T_n},\,\sqrt{T_n},\,\sqrt{n}\big),
	\label{hm:def_D}
\end{align}
and
\begin{align}
	\mci(\theta) &:=
	\begin{pmatrix}
		\int\left(\frac{1}{\gam x}\right)\pi_{\theta}(dx)&-\frac{1}{\gam}&0\\
		-\frac{1}{\gam}&\int\left(\frac{x}{\gam}\right)\pi_{\theta}(dx)&0\\
		0&0&\frac{1}{2}
		\int\left(\frac{1}{\gam^2}\right)\pi_{\theta}(dx)
	\end{pmatrix} 
	\nn\\
	&=\begin{pmatrix}
		\frac{1}{\gam}\frac{2\beta}{2\alpha-\gam}&-\frac{1}{\gam}&0 \\
		-\frac{1}{\gam}&\frac{1}{\gam}\frac{\alpha}{\beta}&0 \\
		0&0&\frac{1}{2\gam^2}
	\end{pmatrix}.
	\label{hm:def_I}
\end{align}
and note that $\mci(\theta)$ is invertible for $\theta\in\overline{\Theta}$:
\begin{align*}
	\mathcal{I}(\theta)^{-1} =
	\begin{pmatrix}
		\frac{\alpha(2\alpha-\gam)}{\beta}&2\alpha-\gam&0\\
		2\alpha-\gam&2\beta&0\\
		0&0&2\gam^2\\
	\end{pmatrix};
\end{align*}
formally, $D_n$ and $\mci(\theta)$ respectively correspond to the optimal rate of convergence for the regular estimators and the Fisher information matrix in case of uniformly elliptic diffusions \cite{Gob02}.

Let $\p_a:=\p/\p a$, the partial-differentiation operator with respect to a variable $a$, with the $k$-fold operation being denoted by $\p_a^k$.
We will first consider a $D_n$-consistent initial estimator $\tesz=(\aesz,\besz,\gesz)$ given in a closed form as in \cite{OveRyd97}, and then construct a Newton-Raphson and/or a scoring one-step estimator toward the GQMLE:
\begin{itemize}
	\item Newton-Raphson method
	\begin{equation}
		\hat{\theta}_{n}^{(1,1)}=\tesz-\left(\p_{\theta}^2\mbbh_n(\tesz)\right)^{-1}\p_{\theta}\mbbh_n(\tesz)
		\nn
	\end{equation}
	\item Method of scoring
	\begin{equation}
		\hat{\theta}_{n}^{(1,2)}=\tesz+D_{n}^{-1}\mathcal{I}(\tesz)^{-1}D_{n}^{-1}\p_{\theta}\mbbh_n(\tesz)
		\nn
	\end{equation}
\end{itemize}
It will turn out that the GMQLE $\tes$ and the one-step estimators $\hat{\theta}_{n}^{(1,1)}$ and $\hat{\theta}_{n}^{(1,2)}$ are all asymptotically equivalent at rate $D_n$.

\subsection{Explicit initial estimator}
\label{hm:sec_preliminary.est}

First, we look at the drift parameter $(\al,\beta)$.
We introduce the conditional least-squares estimator $(\aesz,\besz)$ defined to be a maximizer of
\begin{align*}
	\mbbh_{1,n}(\al,\beta) := -\sumj \left(X_{t_{j}}-\mu_{j-1}(\alpha,\beta)\right)^2,
\end{align*}
given in the closed forms
\begin{align}
	\aesz &= \frac{\bar{X}_n-e^{-\besz h}\bar{X}_n'}{1-e^{-\besz h}}\besz,
	\label{hm:aesz_def}
	\\
	\besz &= -\frac1h \log \left(\frac{\sumj (X_{t_{j-1}}-\bar{X}_n')(X_{t_{j}}-\bar{X}_n)}{\sumj (X_{t_{j-1}}-\bar{X}_n')^2}\right),
	\label{hm:besz_def}
\end{align} 
where $\bar{X}_n := n^{-1} \sumj X_{t_j}$ and $\bar{X}_n' := n^{-1} \sumj X_{t_{j-1}}$.

As for the diffusion parameter $\gam$, we substitute $(\aesz,\besz)$ into $(\al,\beta)$ in the GQLF \eqref{hm:def_GQLF} and denote the resulting function by $\mbbh_{2,n}(\gam)$:
\begin{align*}
	\mbbh_{2,n}(\gam) &:= \mbbh_n\big(\aesz,\besz,\gam\big) \nn\\
	&=\sumj \log \phi\left( X_{t_j};\, \mu_{j-1}(\aesz,\besz),\, \sig^2_{j-1}(\aesz,\besz,\gam)\right) \nn\\
	&= C_n -\frac12 \sumj \bigg(
	\log\sig^2_{j-1}(\aesz,\besz,\gam) \nn\\
	&{}\qquad	+ \frac{1}{\sig^2_{j-1}(\aesz,\besz,\gam)}(X_{t_j} - \mu_{j-1}(\aesz,\besz))^2 \bigg).
\end{align*}
Then, we define $\gesz$ to be a maximizer of $\mbbh_{2,n}$:
\begin{align}
	\label{hm:gesz_def}
	\gesz = \frac1n \sumj \frac{(X_{t_{j}}-\mu_{j-1}(\aesz,\besz))^2}{\frac{1}{\besz}(1-e^{-\besz h}) \left( 
		e^{-\besz h} X_{t_{j-1}} +\frac{\aesz}{2\besz}(1-e^{-\besz h})\right)}.
\end{align}

\begin{lemma}
	\label{hm:lem_cLSE}
	$D_n(\tesz-\tz) = O_p(1)$.
\end{lemma}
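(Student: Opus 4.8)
Throughout this plan write $\rho_0:=e^{-\beta_0 h}$, $m_0:=\al_0/\beta_0$, $\zeta_j:=X_{t_j}-\mu_{j-1}(\al_0,\beta_0)$, and let $\E^{j-1}$ be the conditional expectation given $\mcf_{t_{j-1}}$ under $\pr$, so that $\E^{j-1}(\zeta_j)=0$ and $\E^{j-1}(\zeta_j^2)=\sig^2_{j-1}(\tz)$. The plan is to treat the drift estimator $(\aesz,\besz)$ at rate $\sqrt{T_n}$ and the diffusion estimator $\gesz$ at rate $\sqrt n$ separately. I shall use repeatedly the elementary uniform (in $\beta\in\overline{\Theta}$, for $h$ small) bounds $c_1 h\le 1-e^{-\beta h}\le c_2 h$ and $c_3\le e^{-\beta h}\le 1$, the fact that all positive moments of $X_t$ and, crucially, $\E(X_t^{-2})$ are finite (the latter by Lemma~\ref{hm:lem_AKT}(1) since $2\al_0/\gam_0>5$), the ergodic averages $\frac1n\sumj f(X_{t_{j-1}})\cip\int f\,d\pi_0$, and the exponentially decaying autocovariance $\mathrm{Cov}(X_s,X_t)=\var_{\pi_0}(X)\,e^{-\beta_0|t-s|}$ valid by stationarity.

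For the drift, observe that \eqref{hm:aesz_def}--\eqref{hm:besz_def} are exactly the least-squares fit of $X_{t_j}$ on $(1,X_{t_{j-1}})$ reparametrised through $\hat\rho:=e^{-\besz h}$. Since $X_{t_j}-\rho_0 X_{t_{j-1}}=m_0(1-\rho_0)+\zeta_j$ and $\sumj(X_{t_{j-1}}-\bar{X}_n')=0$, the numerator of $\hat\rho-\rho_0$ reduces to the martingale $\sumj(X_{t_{j-1}}-\bar{X}_n')\zeta_j=\sumj X_{t_{j-1}}\zeta_j-\bar{X}_n'\sumj\zeta_j$; using $\sig^2_{j-1}(\tz)\le Ch(X_{t_{j-1}}+1)$ its conditional-variance bounds give it the order $O_p(\sqrt{T_n})$, while the denominator $\sumj(X_{t_{j-1}}-\bar{X}_n')^2=n(\var_{\pi_0}(X)+o_p(1))$ has exact order $n$. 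Hence $\hat\rho-\rho_0=O_p(\sqrt{T_n}/n)$, and, since $\hat\rho$ and $\rho_0$ stay bounded away from $0$, $\besz-\beta_0=-h^{-1}\log(\hat\rho/\rho_0)=O_p(h^{-1}\sqrt{T_n}/n)=O_p(1/\sqrt{T_n})$. For $\aesz=\besz\,\hat m$ with $\hat m:=\bar{X}_n'+(\bar{X}_n-\bar{X}_n')/(1-\hat\rho)$, telescoping gives $\bar{X}_n-\bar{X}_n'=n^{-1}(X_{t_n}-X_{t_0})=O_p(1/n)$, while $1-\hat\rho=(1-\rho_0)+(\rho_0-\hat\rho)=\beta_0 h(1+o_p(1))$ because $\rho_0-\hat\rho=o_p(h)$ (as $T_n\to\infty$), so $(\bar{X}_n-\bar{X}_n')/(1-\hat\rho)=O_p(1/T_n)$; and $\bar{X}_n'-m_0=O_p(1/\sqrt{T_n})$ follows from Chebyshev and the direct bound $\var(\bar{X}_n')=n^{-2}\var_{\pi_0}(X)\sum_{i,j}e^{-\beta_0 h|i-j|}\le n^{-1}\var_{\pi_0}(X)\frac{1+\rho_0}{1-\rho_0}=O(1/T_n)$. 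Combining, $\aesz-\al_0=(\besz-\beta_0)\hat m+\beta_0(\hat m-m_0)=O_p(1/\sqrt{T_n})$.

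For the diffusion parameter, set $G_n(\al,\beta):=\frac1n\sumj(X_{t_j}-\mu_{j-1}(\al,\beta))^2/\sig^2_{j-1}(\al,\beta,1)$, so that $\gesz=G_n(\aesz,\besz)$. I first treat $G_n(\al_0,\beta_0)=\frac1n\sumj\zeta_j^2/\sig^2_{j-1}(\al_0,\beta_0,1)$: since $\sig^2_{j-1}$ is linear in $\gam$, $\E^{j-1}(\zeta_j^2)=\gam_0\,\sig^2_{j-1}(\al_0,\beta_0,1)$, so $G_n(\al_0,\beta_0)-\gam_0$ is a mean-zero average of martingale differences whose second moment is at most $n^{-2}\sumj\E[\E^{j-1}(\zeta_j^4)/\sig^4_{j-1}(\al_0,\beta_0,1)]$. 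From the stochastic-integral representation \eqref{hm:X_sol} for $\zeta_j$ together with the Burkholder--Davis--Gundy and Cauchy--Schwarz inequalities and Lemma~\ref{hm:lem_AKT}, $\E^{j-1}(\zeta_j^4)\le Ch^2(X_{t_{j-1}}^2+1)$, while $\sig^2_{j-1}(\al_0,\beta_0,1)\ge c\,h\,X_{t_{j-1}}$; hence that summand is $\le C\,\E(1+X_{t_{j-1}}^{-2})<\infty$ by Lemma~\ref{hm:lem_AKT}(1) and \eqref{hm:param.assumption}, so that $G_n(\al_0,\beta_0)-\gam_0=O_p(1/\sqrt n)$.

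It remains to pass from $(\al_0,\beta_0)$ to $(\aesz,\besz)$. On the event of probability tending to one that $(\aesz,\besz)$ lies in a fixed small ball $B$ around $(\al_0,\beta_0)$, the mean value theorem gives $|\gesz-G_n(\al_0,\beta_0)|\le\sup_B\norm{\p_{(\al,\beta)}G_n}\cdot|(\aesz,\besz)-(\al_0,\beta_0)|$. The key point is that every first-order $\theta$-derivative of $\mu_{j-1}$ is $O(h)$ times a polynomial in $X_{t_{j-1}}$, and every $\theta$-derivative of $\sig^2_{j-1}(\cdot,\cdot,1)$ is $O(h^2)$ times such a polynomial, while $\sig^2_{j-1}(\al,\beta,1)\ge c\,h\,X_{t_{j-1}}$ uniformly on $B$; together with $\E^{j-1}|\zeta_j|\le(Ch(X_{t_{j-1}}+1))^{1/2}$ and the reciprocal-moment bounds, this yields $\sup_B\norm{\p_{(\al,\beta)}G_n}=O_p(\sqrt h)$. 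Since $(\aesz,\besz)-(\al_0,\beta_0)=O_p(1/\sqrt{T_n})=O_p(1/\sqrt{nh})$, the plug-in error is $O_p(\sqrt h\cdot(nh)^{-1/2})=O_p(1/\sqrt n)$, whence $\gesz-\gam_0=O_p(1/\sqrt n)$, and collecting the three estimates gives $D_n(\tesz-\tz)=O_p(1)$. I expect the main obstacles to be the two uniform moment estimates just used — finiteness of $\E[\E^{j-1}(\zeta_j^4)/\sig^4_{j-1}(\al_0,\beta_0,1)]$ and the bound $\sup_B\norm{\p_{(\al,\beta)}G_n}=O_p(\sqrt h)$ — both of which hinge on balancing the $O(h)$/$O(h^2)$ smallness of the parameter-derivatives against the lower bound $\sig^2_{j-1}\gtrsim hX_{t_{j-1}}$ and the integrability of $X_t^{-2}$, which is precisely where the strengthened non-attainability condition $2\al>5\gam$ in \eqref{hm:param.assumption} is used. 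I also note that the extra factor $\sqrt h$ gained from differentiating $G_n$ is what upgrades the plug-in error from the ``natural'' rate $1/\sqrt{T_n}$ to the required $1/\sqrt n$, a structural feature of the CIR parametrisation in which $\mu_{j-1}$ depends on $\theta$ only through terms of order $h$.
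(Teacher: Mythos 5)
Your argument is correct, and for the diffusion parameter it takes a genuinely different route from the paper. For the drift part the content is essentially the same: the paper solves the least-squares problem in the reparametrisation $B_n(\rho)=(e^{-\beta h},\tfrac{\al}{\beta}(1-e^{-\beta h}))^\top$, obtains $\sqrt{n/h}\,(B_n(\resz)-B_n(\rho_0))=O_p(1)$ from the martingale-difference array $M_{n,j}$, and transfers the rate to $(\al,\beta)$ by two applications of the delta method; your direct numerator/denominator analysis of $\hat\rho-\rho_0$ together with the decomposition $\hat m=\bar X_n'+(\bar X_n-\bar X_n')/(1-\hat\rho)$, the telescoping identity and the explicit autocovariance bound for $\var(\bar X_n')$ is just a more hands-on bookkeeping of the same martingale $\sumj(X_{t_{j-1}}-\bar X_n')\zeta_j=O_p(\sqrt{T_n})$. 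For $\gesz$, however, the paper writes $\sqrt n(\gesz-\gam_0)=G_{1,n}(\resz)+G_{2,n}(\resz)+G_{3,n}+G_{4,n}$ and, to control the normalized martingale terms evaluated at the \emph{random} point $\resz$, proves tightness of $G_{1,n}(\cdot)$ and $G_{2,n}(\cdot)$ as processes in $\mcc(\overline{\Theta_\rho})$ via the Kolmogorov moment criterion --- which is exactly where the $(2+\del)$-th moments of $\sup_\rho|\p_\rho\pi'_{j-1}|\lesssim\sqrt h(1+X_{t_{j-1}}^{-1}+X_{t_{j-1}}^{-2})$ and hence the full strength of $2\al>5\gam$ are invoked. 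You instead split $\gesz-\gam_0=(G_n(\rho_0)-\gam_0)+(G_n(\resz)-G_n(\rho_0))$, bound the first term by a plain $L^2$ martingale estimate, and absorb the plug-in error through the mean value theorem with the envelope bound $\sup_B\norm{\p_{(\al,\beta)}G_n}=O_p(\sqrt h)$, obtained by moving the supremum inside the sum and using an integrable majorant. This bypasses the functional tightness machinery entirely and only requires $\E(X_t^{-2})<\infty$ (i.e.\ $2\al_0/\gam_0>2$) rather than the higher negative moments needed for the Kolmogorov criterion; the trade-off is that your envelope bound discards the martingale structure of $\p_\rho G_n$ and so yields only the $O_p(1)$ statement of the lemma, whereas the paper's decomposition keeps the centred terms $G_{1,n},G_{2,n}$ in a form that feeds directly into the CLT-type analysis reused later for (AN1)--(AN3). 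Both arguments are complete for the claim as stated.
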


\begin{proof}
	Let $\rho:=(\al,\beta)\in\Theta_\rho\subset(0,\infty)^2$, with $\Theta_\rho$ denoting the parameter space of $\rho$, and let $\resz:=(\aesz,\besz)$.
	Let $B_n(\rho):=(e^{-\beta h}, \frac{\al}{\beta}(1-e^{-\beta h}))^\top$, $x_{j-1}:=(X_{t_{j-1}},1)^\top$, $\bm{x}_n:=(x_0,x_1,\dots,x_{n-1})^\top$, and $\bm{y}_n:=(X_{t_1},\dots,X_{t_n})$.
	Then $\mbbh_{1,n}(\rho)=-\| \bm{y}_n - \bm{x}_n B_n(\rho)\|^2$ and the corresponding estimating equation $\p_\rho\mbbh_{1,n}(\rho)=0$ is equivalent to $\p_{B_n}\mbbh_{1,n}(\rho)=0$.
	The solution $\hat{B}_n = B_n(\res)$ is given by $B_n(\res) = (\bm{x}_n^\top \bm{x}_n)^{-1} \bm{x}_n^\top \bm{y}_n$.
	Note that
	\begin{equation}
		\frac1n \bm{x}_n^\top \bm{x}_n \cip \int
		\begin{pmatrix}
			x^2 & x \\ x & 1
		\end{pmatrix}
		\pi_0(dx) >0.
		\nonumber
	\end{equation}
	By \eqref{hm:X_sol}, we have $\bm{y}_n=\bm{x}_n B_n(\rho_0) + M_n$ under $\pr$, where $M_n =(M_{n,j})_{j=1}^{n}$ with
	\begin{equation}
		M_{n,j} := \gam_0 \int_{t_{j-1}}^{t_j} e^{-(t_j - s)\beta_0} \sqrt{X_s} dw_s.
		\nonumber
	\end{equation}
	Since $\{(M_{n,j},\mcf_{t_{j}})\}_{j\le n}$ forms a martingale-difference array satisfying that
	\begin{equation}
\sup_{n,j}\E\left(\left|h^{-1/2}M_{n,j}\right|^q\right)<\infty
\nonumber
\end{equation}
	for every $q>0$, we have
	\begin{equation}
		\sqrt{\frac{n}{h}} \left( B_n(\res) - B_n(\rho_0) \right)= \left( \frac1n\bm{x}_n^\top \bm{x}_n \right)^{-1}
		\frac{1}{\sqrt{n}} \sumj \frac{1}{\sqrt{h}}M_{n,j} x_{j-1} = O_p(1).
		\label{hm:lem_cLSE-p1}
	\end{equation}
	Apply the delta method to \eqref{hm:lem_cLSE-p1} with the mapping $(x,y) \mapsto (\log x,y)$ to obtain
	\begin{equation}
		\sqrt{\frac{n}{h}} \left(
		(-\besz h) - (-\beta_0 h),~\frac{\aesz h }{\besz h}(1-e^{-\besz h}) - \frac{\al_0 h }{\beta_0 h}(1-e^{-\beta_0 h})
		\right) = O_p(1).
		\nn
	\end{equation}
	Another application of the delta method to the last display with the mapping $(x,y) \mapsto (xy/(e^x -1), -x)$ yields
	$\sqrt{n/h}( \aesz h -\al_0 h,~\besz h -\beta_0 h) = O_p(1)$, followed by $\sqrt{T_n} (\resz -\rho_0) = O_p(1)$.
	
	\medskip
	
	To deduce that $\sqrt{n}(\gesz-\gam_0)=O_p(1)$, we recall the expressions \eqref{hm:cond.var} and \eqref{hm:gesz_def}.
	\tcr{For convenience we introduce the following notation:
		\begin{align}
		c(x,\rho)&=\gam^{-1}\sig^2(x,\theta), \nn\\
		\pi'(x,\rho)&=2\sqrt{h} c(x,\rho)^{-1}\{\mu(x,\rho_0)-\mu(x,\rho)\}, \nn\\
		\pi''(x,\rho)&=h c(x,\rho)^{-1}, \nn\\
			\chi'_{j}(\rho) &:= h^{-1/2}\left(X_{t_{j}} -\mu_{j-1}(\rho)\right), 
			\label{hm:def_chi1}
			\\
			\chi''_{j}(\theta) &:= \frac1h \left\{\left(X_{t_{j}} -\mu_{j-1}(\rho)\right)^2 - \gam c_{j-1}(\rho)\right\}.
			\nn
		\end{align}
		Then, for any $q>0$, the family of random variables $(\chi'_{j})_{j\le n}=(\chi'_{j}(\rho_0))_{j\le n}$ and $(\chi''_{j})_{j\le n}=(\chi''_{j}(\theta_0))_{j\le n}$ are two $L^q(\pr_\theta)$-martingale-difference arrays with respect to the filtration $(\mcf_{t_j})$.} 
	We can write
	\begin{align}
		\sqrt{n}(\gesz-\gam_0) &= \frac{1}{\sqrt{n}}\sumj \left\{\frac{\left(X_{t_{j}}-\mu_{j-1}(\resz)\right)^2}{c_{j-1}(\resz)}-\gamma_0\right\}
		\nn\\
		&= G_{1,n}(\resz)+G_{2,n}(\resz)+G_{3,n}+G_{4,n},
		\nn
	\end{align}
	where
	\begin{align}
		& G_{1,n}(\rho) := \frac{1}{\sqrt{n}}\sumj \pi'_{j-1}(\rho)\, \chi'_{j}, \qquad 
		G_{2,n}(\rho) := \frac{1}{\sqrt{n}}\sumj \pi''_{j-1}(\rho)\, \chi''_{j},
		\nn\\
		& G_{3,n} := \frac1n \sumj \pi''_{j-1}(\resz) \frac{\sqrt{n}}{h}\left(\mu_{j-1}(\rho_0) - \mu_{j-1}(\resz)\right)^2
		\nn\\
		and
		\nn\\
		& G_{4,n} := \frac1n \sumj \pi''_{j-1}(\resz) \frac{\sqrt{n}}{h}\gamma_0\left(c_{j-1}(\rho_0) - c_{j-1}(\resz)\right).
		\nonumber
	\end{align}
	In what follows, we will write $a_n \lesssim b_n$ if $\sup_n(a_n/b_n)\le c$ for some universal constant $c>0$; even when $a_n$ and $b_n$ are non-negative random variables, we will use the same symbol $a_n \lesssim b_n$ if the inequality holds a.s.
	
	To estimate $G_{3,n}$ and $G_{4,n}$, we recall \eqref{hm:cond.mean} and note that 
	$\sup_\rho |\pi''_{j-1}(\rho)| \lesssim X_{t_{j-1}}^{-1}$. We have $G_{3,n}=O_p(n^{-1/2})$, since
	\begin{align}
		|G_{3,n}| &\lesssim \frac{1}{\sqrt{n}} \Bigg\{ \bigg(\frac1n \sumj X_{t_{j-1}}^{-1} \bigg) \left|\sqrt{T_n}(\aesz -\al)\right|^2 \nn\\
		&{}\qquad 
		+\bigg(\frac1n \sumj (X_{t_{j-1}} + X_{t_{j-1}}^{-1}) \bigg) \left|\sqrt{T_n}(\besz -\beta_0)\right|^2 \Bigg\}
		\nonumber
	\end{align}
	and the terms inside the curly bracket are $O_p(1)$ by the law of large numbers \eqref{hm:LLN}.
	Likewise, by \eqref{hm:cond.var},
	\begin{align}
		|G_{4,n}| \lesssim \frac1n \sumj X_{t_{j-1}}^{-1}  \left|\sqrt{T_n}(\resz -\rho_0)\right| \sqrt{h},
		\nonumber
	\end{align}
	so that $G_{4,n}=O_p(n^{-1/2})$. 
	
	It remains to deduce that both $G_{1,n}(\resz)$ and $G_{2,n}(\resz)$ are $O_p(1)$.
	Regard $G_{1,n}(\rho)$ and $G_{2,n}(\rho)$ as stochastic processes in $\mcc(\overline{\Theta_\rho})$.
	Since Burkholder's inequality ensures that $G_{i,n}(\rho)=O_p(1)$ for each $i=1,2$ and $\rho\in\overline{\Theta_\rho}$, it suffices to verify the tightness of $G_{i,n}(\cdot)$ in $\mcc(\overline{\Theta_\rho})$.
	In view of the Kolmogorov tightness criterion (e.g. \cite{IbrHas81}), it is in turn sufficient to show the moment estimate
	\begin{equation}
		\exists \del,K,C>0~\forall\rho,\rho',\quad 
		\sup_n \E\left(\left|G_{1,n}(\rho)-G_{1,n}(\rho')\right|^K\right) \le C |\rho-\rho'|^{2+\del}.
		\label{hm:lem_cLSE-p2}
	\end{equation}
	
	Elementary calculations give
	\begin{align}
		\sup_\rho |\p_\rho \pi'_{j-1}(\rho)| &\lesssim \sqrt{h}\big(1+X_{t_{j-1}}^{-1}+X_{t_{j-1}}^{-2}\big).
		\nonumber
	\end{align}
	By the standing assumption that $2\al > 5\gam$ for each $\rho\in\overline{\Theta_\rho}$ and Lemma \ref{hm:lem_AKT}, we can find a sufficiently small $\del>0$ such that, for $K=2+\del$,
	\begin{equation}
		\sup_{j\le n}\E\left((1+X_{t_{j-1}})^{C'}\sup_\rho |\p_\rho \pi'_{j-1}(\rho)|^K \right) = O(1).
		\nonumber
	\end{equation}
	By means of Burkholder's and Jensen's inequalities and using the estimate that $\E_{\tz}^{j-1}(|\chi'_j|^K) \lesssim (1+X_{t_{j-1}})^{C'}$ a.s. for some universal constant $C'=C'(K)$, we see that
	\begin{align}
		& \sup_n \E\left(\left|G_{1,n}(\rho)-G_{1,n}(\rho')\right|^K\right) \nn\\
		&= \sup_n \E\left(\left|\frac{1}{\sqrt{n}} \sumj \left(\pi'_{j-1}(\rho)-\pi'_{j-1}(\rho')\right) \chi'_j \right|^K\right) \nn\\
		&\lesssim \sup_n \E\left(\left| \frac1n \sumj \left(\pi'_{j-1}(\rho)-\pi'_{j-1}(\rho')\right)^2 (\chi'_j)^2 \right|^{K/2}\right) \nn\\
		&\lesssim \sup_n \frac1n \sumj \E\left\{\left(\pi'_{j-1}(\rho)-\pi'_{j-1}(\rho')\right)^K \E_{\tz}^{j-1}(|\chi'_j|^K) \right\} \nn\\
		&\lesssim \sup_n \frac1n \sumj \E\left((1+X_{t_{j-1}})^{C'}\sup_\rho |\p_\rho \pi'_{j-1}(\rho)|^K \right) |\rho-\rho'|^{K} \nn\\
		&\lesssim |\rho-\rho'|^{K}.
		\nonumber
	\end{align}
	This verifies \eqref{hm:lem_cLSE-p2} for $G_{1,n}$. In a similar manner we can deduce \eqref{hm:lem_cLSE-p2} for $G_{2,n}$ by using
	\begin{align}
		\sup_\rho |\p_\rho \pi''_{j-1}(\rho)| &\lesssim
		h \big(X_{t_{j-1}}^{-1} + X_{t_{j-1}}^{-2}\big).
		\nonumber
	\end{align}
	Thus, we have seen that both $G_{1,n}(\cdot)$ and $G_{2,n}(\cdot)$ are tight in $\mcc(\overline{\Theta_\rho})$.
	Hence $\sqrt{n}(\gesz-\gam_0)=O_p(1)$, completing the proof.
\end{proof}

We could prove the asymptotic normality of $D_n(\tesz-\tz)$ directly by means of the central limit theorem for martingale difference arrays, though it does not play an important role in our study.


\subsection{Asymptotics for joint GQMLE}

Recall the definitions \eqref{hm:def_GQLF}, \eqref{hm:def_GQMLE}, and \eqref{hm:def_D}.
The objective of this section is to deduce the asymptotic normality of the GQMLE.
Let $\cil$ denote the convergence in law, and recall \eqref{hm:def_I} for the definition of $\mci(\tz)$.

\begin{lemma}
	\label{hm:lem_AN.GQMLE}
	We have
	\begin{equation}
		D_n(\tes-\tz) \cil N_p\left(0,\, \mci(\tz)^{-1}\right).
		\label{hm:gqmle-t2}
	\end{equation}
\end{lemma}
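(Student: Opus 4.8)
The plan is to follow the standard argument for asymptotic normality of $M$-estimators, i.e.\ a Taylor expansion of the score $\p_\theta\mbbh_n$ around $\tz$, but carried out with the rate matrix $D_n$ inserted so that all quantities are properly normalized. Writing $\tes$ for a maximizer of $\mbbh_n$ over $\overline\Theta$, we have $\p_\theta\mbbh_n(\tes)=0$ whenever $\tes$ is interior (consistency, established via the contrast-function argument below, forces this with probability tending to one since $\tz\in\Theta$ is interior). A mean-value expansion gives
\begin{equation}
0 = D_n^{-1}\p_\theta\mbbh_n(\tes) = D_n^{-1}\p_\theta\mbbh_n(\tz) + \left(D_n^{-1}\p_\theta^2\mbbh_n(\bar\theta_n)D_n^{-1}\right)D_n(\tes-\tz),
\nonumber
\end{equation}
for some (random, row-dependent) $\bar\theta_n$ between $\tes$ and $\tz$. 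Thus the proof reduces to three ingredients: (i) consistency $\tes\cip\tz$; (ii) the central limit theorem $D_n^{-1}\p_\theta\mbbh_n(\tz)\cil N_p(0,\mci(\tz))$; and (iii) the uniform law of large numbers $\sup_{|\theta-\tz|\le\epsilon}\|D_n^{-1}\p_\theta^2\mbbh_n(\theta)D_n^{-1}+\mci(\tz)\|\cip 0$. Combining (i)–(iii) with Slutsky's lemma and the invertibility of $\mci(\tz)$ gives $D_n(\tes-\tz)\cil \mci(\tz)^{-1}N_p(0,\mci(\tz))=N_p(0,\mci(\tz)^{-1})$.

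For step (ii) I would compute $\p_\theta\mbbh_n(\tz)$ explicitly from \eqref{hm:def_GQLF}. Each summand is a smooth function of $X_{t_{j-1}}$, $X_{t_j}$, $h$ and $\theta$; expanding in $h$ and using the conditional-moment identities \eqref{hm:cond.mean}–\eqref{hm:cond.var}, the leading stochastic terms are martingale-difference arrays built from $\chi'_j(\rho_0)$ and $\chi''_j(\theta_0)$ defined in \eqref{hm:def_chi1}. The drift-parameter components scale by $\sqrt{T_n}$ and the diffusion component by $\sqrt n$ — exactly the entries of $D_n$. One then applies the martingale CLT (checking the conditional-variance convergence via the law of large numbers in Lemma~\ref{hm:lem_AKT} together with the ergodic average \eqref{hm:conti.LLN}, and the conditional Lyapunov condition via the $L^q$-boundedness of $\chi'_j,\chi''_j$); the limiting covariance is precisely $\mci(\tz)$ after evaluating the relevant $\pi_0$-integrals $\int(\gam_0 x)^{-1}\pi_0(dx)$, $\int(x/\gam_0)\pi_0(dx)$, $\int(2\gam_0^2)^{-1}\pi_0(dx)$. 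The off-diagonal $-1/\gam$ entries arise from the coupling between the $\al$- and $\beta$-derivatives of the conditional mean. For step (iii) the same Taylor-in-$h$ bookkeeping shows $D_n^{-1}\p_\theta^2\mbbh_n(\theta)D_n^{-1}$ is, to leading order, an empirical average of $\mcc^1$ functions of $X_{t_{j-1}}$ with polynomial growth in $x$ and $x^{-1}$ (the negative powers controlled by $2\al>5\gam$ and Lemma~\ref{hm:lem_AKT}); pointwise convergence follows from \eqref{hm:LLN} and its extension, and uniformity in $\theta$ over a neighborhood follows from a tightness/equicontinuity argument of the same Kolmogorov type already used in the proof of Lemma~\ref{hm:lem_cLSE}.

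For step (i), consistency, I would exhibit a limiting contrast: show $\frac1{T_n}\big(\mbbh_n(\theta)-\mbbh_n(\tz)\big)\cip -\mathbb{Y}_1(\al,\beta)$ uniformly on $\overline\Theta$, where $\mathbb{Y}_1$ is a nonnegative function of $(\al,\beta)$ vanishing only at $(\al_0,\beta_0)$ (this identifies the drift parameters at the faster rate), and separately that the $\gam$-profiled criterion identifies $\gam_0$; uniform convergence again comes from the Kolmogorov tightness criterion applied to $\theta\mapsto \frac1{T_n}\mbbh_n(\theta)$ and its $\theta$-gradient. The identifiability is where the non-degeneracy of $\mci(\tz)$ is really used. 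I expect the main obstacle to be the careful $h$-expansion controlling the remainder terms in (ii) and (iii): because the model is not uniformly elliptic, each error term carries negative powers of $X_{t_{j-1}}$, and one must repeatedly invoke the margin $2\al>5\gam$ in \eqref{hm:param.assumption} (rather than the bare non-attainability condition $2\al>\gam$) to keep the relevant moments $\E(X_{t_{j-1}}^{-p})$ finite for the exponents $p$ that appear — this is exactly the reason the paper imposes the strengthened constraint, and verifying that the chosen $p$'s stay below the allowed threshold is the delicate part of the argument.
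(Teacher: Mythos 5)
Your proposal follows essentially the same route as the paper: two-rate profiled contrast functions for consistency of $(\aes,\bes)$ and $\ges$, a Taylor expansion of the score on the event $\{\p_\theta\mbbh_n(\tes)=0\}$, a martingale CLT with Lyapunov condition for $D_n^{-1}\p_\theta\mbbh_n(\tz)$ built from the arrays $\chi'_j,\chi''_j$, and a uniform law of large numbers for the normalized Hessian, with the margin $2\al>5\gam$ and Lemma~\ref{hm:lem_AKT} controlling the negative powers of $X_{t_{j-1}}$ throughout. The only imprecision is in your step (iii): uniform convergence of $-D_n^{-1}\p_\theta^2\mbbh_n(\theta)D_n^{-1}$ to the \emph{constant} $\mci(\tz)$ cannot hold over a fixed $\epsilon$-ball (the pointwise limit varies with $\theta$); the paper's condition (AN3) instead takes the supremum over shrinking balls $|\theta-\tz|\le\del_n$, which is what consistency actually delivers and is all that the argument requires.
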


Trivially we have
\begin{equation}
	\mci(\tes)^{1/2}D_n(\tes-\tz) \cil N_p\left(0,\, I_3\right),
	\nonumber
\end{equation}
where $I_p$ denotes the $p$-dimensional identity matrix.
The scenario of the proof is much the same as in the classic uniformly-elliptic diffusion models as in \cite{Kes97}, except that we need to take care about tightness/integrability issues caused by the diffusion-coefficient form.

Before proceeding to the proof, we introduce some notation and make a few remarks.
Given a random function $\zeta_n(\theta)$ and a real sequence $r_n>0$, we will write $\zeta_n(\theta)=o_p^\ast(r_n)$ and $\zeta_n(\theta)=O_p^\ast(r_n)$ if $\sup_\theta |r_n^{-1}\zeta_n(\theta)|\cip 0$ and $\sup_\theta |r_n^{-1}\zeta_n(\theta)|=O_p(1)$, respectively; we will analogously write $\zeta_n(\theta)=o^\ast(r_n)$ and $\zeta_n(\theta)=O^\ast(r_n)$ when $\zeta_n$ is non-random.
We also note that under \eqref{hm:LLN} the following uniform law of large numbers is in force:
\begin{equation}
	\frac1n \sumj f_{j-1}(\rho) \cip \int f(x,\rho)\pi_0(dx)
	\label{hm:gqmle-t3}
\end{equation}
uniformly in $\rho=(\al,\beta)$ for any measurable $f$ such that
\begin{equation}
	\max_{k=0,1}\sup_\rho |\p_{\rho}^k f(x,\rho)| \lesssim |x|^{-2} \vee (1+|x|^K), \qquad x>0,
	\nonumber
\end{equation}
for some $K\ge 0$; this does hold, since we have the tightness 
\begin{equation}
\sup_\rho \left| \frac1n\sumj \p_\rho f_{j-1}(\rho)\right| = O_p(1)
\nonumber
\end{equation}
in addition to the $\rho$-wise convergence \eqref{hm:gqmle-t3}, so that the stochastic Ascoli-Arzel\`{a} theorem \cite[Theorem 7.3]{Bil99} applies.
This fact will be repeatedly used in the sequel without mentioning.

\medskip

Now, we turn to proving \eqref{hm:gqmle-t2}.
The consistency $\tes\cip\tz$ can be derived in a similar manner to \cite{Kes97}, through applying the argmax theorem twice.
At first stage, we define
\begin{align}
	\mbby_{1,n}(\gam) &= \frac1n\left( \mbbh_n(\res,\gam)-\mbbh_n(\res,\gam_0)\right), \nn\\
	\mbby_{1,0}(\gam) &= -\frac12 \left(\log \frac{\gam}{\gam_0}+\frac{\gam_0}{\gam}-1\right),
	\nonumber
\end{align}
the former being maximized at $\ges$. The argmax theorem ensures $\ges\cip\gam_0$ if we show that $\sup_\gam|\mbby_{1,n}(\gam) - \mbby_{1,0}(\gam)|\cip 0$ and $\argmax\,\mbby_{1,0}=\{\gam_0\}$.
The latter is trivial, and the former can be seen as follows: since $h\E_{\tz}^{j-1}\{(\chi'_{j})^2\}=\sig_{j-1}^2(\tz)$ with $\chi'_{j}$ given by \eqref{hm:def_chi1}, applying Burkholder's inequality and \eqref{hm:LLN} we obtain
\tcr{
\begin{align}
	\mbby_{1,n}(\gam) &= -\frac{1}{2n} \sumj\left\{ \log\left(\frac{\sig_{j-1}^2(\rho_0,\gam)}{\sig_{j-1}^2(\tz)}\right)
	+\left(\sig_{j-1}^{-2}(\rho_0,\gam) - \sig_{j-1}^{-2}(\tz)\right)\,(\chi'_{j})^2h\right\} \nn\\
	&= -\frac{1}{2n} \sumj\bigg\{ \log\frac{\gam}{\gam_0}
	+\bigg(\frac{1}{\gam}-\frac{1}{\gam_0}\bigg) c_{j-1}^{-1}(\rho_0) (\chi'_{j})^2h \bigg\}\nn\\
	&=-\frac{1}{2n} \sumj\bigg\{ \log\frac{\gam}{\gam_0}
	+\bigg(\frac{\gam_0}{\gam}-1\bigg)\bigg\} + O^\ast_p\left(\frac{1}{\sqrt{n}}\right) \nn\\
	&= \mbby_{1,0}(\gam) + o^\ast_p(1).
	\label{hm:gqmle-t1}
\end{align}
As for the consistency of $\res=(\aes,\bes)$, we introduce
\begin{align}
	\mbby_{2,n}(\rho) &:= \frac{1}{T_n}\left( \mbbh_n(\rho,\ges)-\mbbh_n(\rho_0,\ges)\right), \nn\\
	\mbby_{2,0}(\rho) &:= -\frac{1}{2\gam_0} (\rho-\rho_0)^\top 
	\int 
	\begin{pmatrix}
		x^{-1} & -1 \\ -1 & x
	\end{pmatrix}
	\pi_{0}(dx) (\rho-\rho_0).
	\nn
\end{align}
Obviously, we have $\argmax_{\rho}\,\mbby_{2,n}(\rho)=\{\rho_0\}$.
Some manipulation gives the following decomposition:
\begin{align}
	\mbby_{2,n}(\rho) &= -\frac{1}{2T_n}\sumj \log\left(\frac{c_{j-1}(\rho)}{c_{j-1}(\rho_0)}\right)
-\frac{\ges^{-1}}{2T_n} \sumj \Big(c_{j-1}^{-1}(\rho)h\left\{(\chi'_{j}(\rho))^2 - (\chi'_{j})^2\right\}
	\nn\\
	&{}\qquad 
	+h(c_{j-1}^{-1}(\rho)-c_{j-1}^{-1}(\rho_0))(\chi'_{j})^2\Big)
	\nn\\
	&= \mbby_{2,n}^{(1)}(\rho) + \mbby_{2,n}^{(2)}(\rho) + \mbby_{2,n}^{(3)}(\rho),
	\nonumber
\end{align}
where, letting $g_{j-1}(\rho):=c_{j-1}(\rho_0)/c_{j-1}(\rho)$,
\begin{align}
\mbby_{2,n}^{(1)}(\rho) &:= \frac12\left(-\frac{1}{T_n} \sumj \log g_{j-1}(\rho) 
+ \frac{\ges^{-1}}{T_n} \sumj (g_{j-1}(\rho)-1) h (\chi'_j)^2 \right),
\nn\\
\mbby_{2,n}^{(2)}(\rho) &:=
\frac{\ges^{-1}}{\sqrt{T_n}} \, \left\{\frac{1}{\sqrt{n}}\sumj \pi''_{j-1}(\rho) \left(\frac{\mu_{j-1}(\rho)-\mu_{j-1}}{h}\right)
	\chi'_j \right\},
\nn\\
\mbby_{2,n}^{(3)}(\rho) &:= - \frac{\ges^{-1}}{2n} \sumj \pi''_{j-1}(\rho) \left(\frac{\mu_{j-1}(\rho)-\mu_{j-1}}{h}\right)^2.
\nonumber
\end{align}
In an analogous manner to \eqref{hm:gqmle-t1} and through the tightness criterion as in the proof of Lemma \ref{hm:lem_cLSE}, we can obtain $\mbby_{2,n}^{(2)}(\rho)= O_p^\ast(T_n^{-1/2})=o_p^\ast(1)$ and $\mbby_{2,n}^{(3)}(\rho) =\mbby_{2,0}(\rho) + o_p^\ast(1)$.
To see that $\mbby_{2,n}^{(1)}(\rho)$, we note the inequality $0\le -(\log x) + x-1\le(2^{-1} \vee x^{-1})(x-1)^2$ valid for $x>0$
and the bound $|g_{j-1}(\rho)-1|\lesssim h(1+X_{t_{j-1}}^{-1})$.
Using them together with the consistency of $\ges$ and the tightness argument as before, we obtain
\begin{align}
\mbby_{2,n}^{(1)}(\rho) &= 
\frac{1}{2 T_n} \sumj \Big(-\log g_{j-1}(\rho) + (g_{j-1}(\rho)-1)\Big) \nn\\
&{}\qquad - \frac{\ges^{-1}(\ges-\gam_0)}{2T_n} \sumj (g_{j-1}(\rho)-1)+O^\ast_p\left(\frac{1}{\sqrt{n}}\right)
\nn\\
&=O^\ast_p\left(\frac{nh^2}{T_n}\right)+o^\ast_p(1) +O^\ast_p\left(\frac{1}{\sqrt{n}}\right) = o^\ast_p(1).
\nonumber
\end{align}
}
After all we have derived $\mbby_{2,n}^{(1)}(\rho)=\mbby_{2,0}(\rho)+o_p^\ast(1)$, hence followed by $\res\cip\rho_0$.

\medskip

Having the consistency of $\tes$ in hand, we proceed with the standard route through the third-order Taylor expansion of $\p_\theta\mbbh_n(\tes)$ around $\tz\in\Theta$. That is, we may and do focus on the event $\{\p_\theta\mbbh_n(\tes)=0\}$, on which
\begin{equation}
	0 = D_n^{-1}\p_\theta\mbbh_n(\tz) + \left(
	-\int_0^1 D_n^{-1}\p_\theta^2\mbbh_n\left(\tz+s(\tes-\tz)\right)D_n^{-1} ds \right) [D_n(\tes-\tz)].
	\nonumber
\end{equation}
Hence, by the consistency $\tes\cip\tz$, it suffices to verify the following statements:
\begin{description}
	\item[{\rm (AN1)}] $\ds{\D_n(\tz) := D_n^{-1}\p_\theta\mbbh_n(\tz) \cil N_p\left(0,\mci(\tz)\right)}$;
	\item[{\rm (AN2)}] $\ds{\mci_n(\tz) := -D_n^{-1}\p_\theta^2\mbbh_n(\tz) D_n^{-1} \cip \mci(\tz)}$;
	\item[{\rm (AN3)}] $\ds{\sup_{\theta:\,|\theta-\tz|\le\del_n} \left|\mci_n(\theta)-\mci(\tz)\right| \cip 0}$ for any (non-random) positive sequence $\del_n\to 0$.
\end{description}
Let us look at the the partial derivatives $\p_\theta^k\mbbh_n(\theta)=\p_{(\rho,\gam)}^k\mbbh_n(\rho,\gam)$ for $k=1,2$ through the expression \eqref{hm:def_GQLF}: with the notation introduced in Section \ref{hm:sec_preliminary.est},
\begin{align}
	\mbbh_n(\theta) &= C_n -\frac12 \sumj \left(
	\log\gam + \log c_{j-1}(\rho) + \frac{h}{\gam c_{j-1}(\rho)}
	\chi'_{j-1}(\rho)^2 \right).
	\nonumber
\end{align}
The calculations will be elementary, yet tedious.
For notational convenience we will write $A_k(x;\rho,h)$, $k\ge 0$, for a generic matrix-valued measurable function on $\mbbr\times\Theta_\rho\times (0,1]$ such that
\begin{equation}
	\exists C>0,\quad \limsup_{h\to 0}\,\sup_\rho|A_k(x;\rho,h)| \lesssim \frac{1+|x|^C}{1\wedge |x|^k}.
	\label{hm:A_property}
\end{equation}
Further, we write $\mca_k(\rho,h)$ for the set of all $A_k(x;\rho,h)$ satisfying \eqref{hm:A_property}.
It should be noted that by the assumption \eqref{hm:param.assumption} and Lemma \ref{hm:lem_AKT}, for each $k<5$ we can find a (small) constant $\ep_k>0$ such that
\begin{equation}
	\sup_t \E\left( |A_k(X_t;\rho,h)|^{1+\ep_k} \right)<\infty.
	\label{hm:A_property2}
\end{equation}


\begin{proof}[Proof of (AN1)]
	Let $\zeta_j:=(\chi'_j,\chi''_j)=(\chi'_j(\rho_0),\chi''_j(\tz))$.
	In what follows, we abbreviate  $\p_\rho$ as  ``overhead dot'' (like $\p_\rho \mu_{j-1}(\rho)$ as $\dot{\mu}_{j-1}(\rho)$), respectively, and moreover, omit ``$(\tz)$'' and ``$(\rho_0)$'' from the notation.
	Then, straightforward calculations lead to
	\begin{align}
		\D_n(\tz) &= 
		D_n^{-1}\sumj
		\begin{pmatrix}
			\frac{\dot{\mu}_{j-1} \sqrt{h}}{\gam_0 c_{j-1}} & \frac{\dot{c}_{j-1} h}{2\gam_0 c_{j-1}^2} \\
			0 & \frac{h}{2\gam_0^2 c_{j-1}}
		\end{pmatrix}
		[\zeta_j]
		=
		\frac{1}{\sqrt{n}}\sumj
		\begin{pmatrix}
			\frac{\dot{\mu}_{j-1}}{\gam_0 c_{j-1}} & \frac{\dot{c}_{j-1} h}{2\gam_0 c_{j-1}^2} \\
			0 & \frac{h}{2\gam_0^2 c_{j-1}}
		\end{pmatrix}
		[\zeta_j].
		\nonumber
	\end{align}
	Note that $\frac{\dot{\mu}_{j-1}}{\gam_0 c_{j-1}}$ and $\frac{h}{2\gam_0^2 c_{j-1}}$ belong to the class $\mca_1(\rho,h)$ as functions of $X_{t_{j-1}}$, while $\frac{\dot{c}_{j-1}}{2\gam_0 c_{j-1}^2}$ to $\mca_1(\rho,h)$.
	By the Burkholder and H\"{o}lder inequalities combined with the $L^q$-property of $(\zeta_j)$, we obtain, for $\del>0$ small enough,
	\begin{align}
		& \E\left(\left|\frac{1}{\sqrt{n}}\sumj \frac{\dot{c}_{j-1} }{2\gam_0 c_{j-1}^2} [\zeta_j] \right|^2 \right) \nn\\
		&\lesssim \frac1n \sumj \E\left(\left|\frac{\dot{c}_{j-1} }{2 c_{j-1}^2} [\zeta_j] \right|^2 \right)
		\nn\\
		&\lesssim \frac1n \sumj \E\left(\left|\frac{\dot{c}_{j-1}/h^2 }{2 (c_{j-1}/h)^2}\right|^{2+\del}\right)^{2/(2+\del)}
		\E\left(\left| \zeta_j\right|^{2(2+\del)/\del}\right)^{\del/(2+\del)} \lesssim 1,
		\label{AN1_p1}
	\end{align}
	where we used \eqref{hm:A_property2}.
	
	The estimate \eqref{AN1_p1} entails that the off-diagonal part in the expression of $\D_n(\tz)$ is of order $O_p(h)$, hence is asymptotically negligible.
	Write $\E^{j-1}:=\E^{j-1}_{\tz}$, and also \tcr{$x^{\otimes2} := xx^{\top}$ for any vector or matrix $x$.}
	Using Lemma \ref{hm:lem_AKT} and the same argument as in \eqref{hm:gqmle-t1}, we see that the leading term of the quadratic characteristic 
	of $\D_n(\tz)$ equals
	\begin{align}
		& \frac{1}{n}\sumj
		\begin{pmatrix}
			\frac{\dot{\mu}_{j-1}}{\gam_0 c_{j-1}} & 0 \\
			0 & \frac{h}{2\gam_0^2 c_{j-1}}
		\end{pmatrix}
		\E^{j-1}\left(\zeta_j^{\otimes 2}\right)
		\begin{pmatrix}
			\frac{\dot{\mu}_{j-1}}{\gam_0 c_{j-1}} & 0 \\
			0 & \frac{h}{2\gam_0^2 c_{j-1}}
		\end{pmatrix}^{\top} \nn\\
		&=
		\frac{1}{n}\sumj
		\begin{pmatrix}
			\frac{\dot{\mu}_{j-1}}{\gam_0 c_{j-1}} & 0 \\
			0 & \frac{h}{2\gam_0^2 c_{j-1}}
		\end{pmatrix}
		\begin{pmatrix}
			\sig_{j-1}^2(\tz)/h & (\chi'_j)^3 \sqrt{h} - (\sig_{j-1}^2/h) \sqrt{h}\, \chi'_j \\
			\mathrm{sym.} & \E^{j-1}\left\{((\chi'_j)^2 - \sig^2_{j-1}/h)^2\right\}
		\end{pmatrix}
		\nn\\
		&{}\qquad \times
		\begin{pmatrix}
			\frac{\dot{\mu}_{j-1}}{\gam_0 c_{j-1}} & 0 \\
			0 & \frac{h}{2\gam_0^2 c_{j-1}}
		\end{pmatrix}^{\top}
		\nn\\
		&= 
		\frac{1}{n}\sumj
		\begin{pmatrix}
			\frac{\dot{\mu}_{j-1}^{\otimes 2}}{h \sig^2_{j-1}} & 0 \\
			0 & \frac{1}{2\gam_0^2}
		\end{pmatrix}
		+ o_p(1) \nn\\
		&= 
		\frac{1}{n}\sumj
		\mathrm{diag}
		\left\{
		\frac{1}{\gam_0}
		\begin{pmatrix}
			X_{t_{j-1}}^{-1} & -1 \\ -1 & X_{t_{j-1}}
		\end{pmatrix},~\frac{1}{2\gam_0^2}
		\right\}
		+ o_p(1) \nn\\
		&=
		\frac{1}{n}\sumj
		\mathrm{diag}
		\left\{
		\frac{1}{\gam_0}\int_0^\infty 
		\begin{pmatrix}
			x^{-1} & -1 \\ -1 & x
		\end{pmatrix} \pi_0(dx),~\frac{1}{2\gam_0^2}
		\right\}
		+ o_p(1) \cip \mci(\tz).
		\nonumber
	\end{align}
	In a quite similar manner to \eqref{AN1_p1}, we can pick a $\del\in(0,3)$ for which the H\"older inequality guarantees
	\begin{align}
		& \sumj \E\left(\left|\frac{1}{\sqrt{n}}\frac{\dot{\mu}_{j-1}}{\gam_0 c_{j-1}}[\zeta_j]\right|^{2+\del}\right)
		+ \sumj \E\left(\left|\frac{1}{\sqrt{n}}\frac{h}{2\gam_0^2 c_{j-1}}[\zeta_j]\right|^{2+\del}\right) \nn\\
		&\lesssim \frac{1}{n^{\del/2}} \cdot \frac1n\sumj 1 \lesssim \frac{1}{n^{\del/2}} \to 0.
		\nonumber
	\end{align}
	This verifies the Lyapunov condition, and the martingale central limit theorem concludes (AN1).
\end{proof}

The arguments concerning moment estimates in the above proof will be repeatedly used in the proofs of (AN2) and (AN3), hence we will proceed without mentioning the full details.

\begin{proof}[Proof of (AN2)]
	By computing $\p_\theta^2\mbbh_n(\theta)$, we observe that
	\begin{align}
		-\frac{1}{T_n}\p^2_\rho\mbbh_n 
		&= \frac1n\sumj \frac{(\dot{\mu}_{j-1}/h)^{\otimes 2}}{\sig^2_{j-1}/h} +\frac1n\sumj\Big( h A_2(X_{t_{j-1}};\rho,h) \nn\\
		&{}\qquad  + \sqrt{h} A_2(X_{t_{j-1}};\rho,h)\chi'_j + h A_3(X_{t_{j-1}};\rho,h)\chi''_j
		\Big)
		\nn\\
		&=\frac1n\sumj \frac{(\dot{\mu}_{j-1}/h)^{\otimes 2}}{\sig^2_{j-1}/h} + o_p(1)
		\nn\\
		&=\frac{1}{\gam_0}\int_0^\infty 
		\begin{pmatrix}
			x^{-1} & -1 \\ -1 & x
		\end{pmatrix} \pi_0(dx) + o_p(1),
		\nn\\
		-\frac{1}{n\sqrt{h}}\p_\rho\p_\gam\mbbh_n &= 
		\frac1n\sumj \Big( \sqrt{h} A_1(X_{t_{j-1}};\rho,h) + A_1(X_{t_{j-1}};\rho,h)\chi'_j \nn\\
		&{}\qquad + \sqrt{h} A_2(X_{t_{j-1}};\rho,h)\chi''_j \Big) \nn\\
		&=o_p(1), \nn\\
		-\frac{1}{n}\p^2_\gam\mbbh_n &= \frac{1}{2\gam_0^2} + \frac1n\sumj A_2(X_{t_{j-1}};\rho,h) \chi''_j
		= \frac{1}{2\gam_0^2} + o_p(1).
		\nonumber
	\end{align}
	Applying Lemma \ref{hm:lem_AKT} with \eqref{hm:A_property2} to these expressions yields (AN2): $\mci_n(\tz) \cip \mci(\tz)$.
\end{proof}

\begin{proof}[Proof of (AN3)]
	To show the claim, we simply look at the uniform-in-$\theta$ behavior of $\p_\theta^3\mbbh_n(\theta)$.
	Further by continuing the partial differentiations from $\p_\theta^2\mbbh_n(\theta)$ and again applying Lemma \ref{hm:lem_AKT} with \eqref{hm:A_property2}, it is not difficult to deduce the order estimate $\sup_\theta \left|\p_\theta\mci_n(\theta)\right| = O_p(1)$.
	The claim follows on noting that
\begin{align}
\sup_{\theta:\,|\theta-\tz|\le\del_n} \left|\mci_n(\theta) -\mci(\tz)\right|
&\lesssim \left|\mci_n(\tz) -\mci(\tz)\right| + \del_n \sup_\theta \left|\p_\theta\mci_n(\theta)\right| \nn\\
&=o_p(1) + O_p(\del_n) = o_p(1)
\nonumber
\end{align}
by (AN2).
\end{proof}

\subsection{One-step improvement}

We have seen that $D_n(\tesz-\tz) = O_p(1)$ and $D_n(\tes-\tz) \cil N_p\left(0,\, \mci(\tz)^{-1}\right)$ in Lemmas \ref{hm:lem_cLSE} and \ref{hm:lem_AN.GQMLE}, respectively.
Let us now look at the Newton-Raphson method and the method of scoring, from $\tesz$ to the joint GQMLE $\tes$ defined by \eqref{hm:def_GQMLE}:
\begin{align}
	\hat{\theta}_{n}^{(1,1)} &= \tesz-\left(\p_{\theta}^2\mbbh_n(\tesz)\right)^{-1}\p_{\theta}\mbbh_n(\tesz),
	\label{onestep-NR}\\
	\hat{\theta}_{n}^{(1,2)} &= \tesz+D_{n}^{-1}\mathcal{I}(\tesz)^{-1}D_{n}^{-1}\p_{\theta}\mbbh_n(\tesz).
	\label{onestep-scoring}
\end{align}
Observe that the following statements hold from what we have obtained so far, in particular (AN1) to (AN3):
\begin{align}
	& |\mci_n(\tesz)^{-1}| = O_p(1), \nn\\
	& |I_p - \mci_n(\tesz)^{-1}\mci_n(\tesz')| = o_p(1),
	\nonumber
\end{align}
where $\tesz'$ is any random point on the segment connecting $\tesz$ and $\tes$.
Let $\desz := D_n(\tesz-\tes)$; we have $\desz=O_p(1)$. \tcr{Recall that we are focusing on the event $\{\p_\theta\mbbh_n(\tes)=0\}$, so that $\Delta_n(\tes)=0$.} For some $\tesz'$ as above,
\begin{align}
	D_n(\hat{\theta}_{n}^{(1,1)} - \tes) 
	&= \desz + \mci_n(\tesz)^{-1} \D_n(\tesz) \nn\\
	&= \desz + \mci_n(\tesz)^{-1} \left\{\D_n(\tes) + D_n^{-1}\p_\theta^2\mbbh_n(\tesz')[\tesz-\tes]\right\} \nn\\
	&= \left( I_p - \mci_n(\tesz)^{-1}\mci_n(\tesz') \right) \desz  = o_p(1).
	\nonumber
\end{align}
Hence, we obtained $D_n(\hat{\theta}_{n}^{(1,1)}-\tes) = o_p(1)$.
An analogous argument leads to $D_n(\hat{\theta}_{n}^{(1,2)}-\tes) = o_p(1)$, and we conclude the following theorem.

\begin{theorem}
	\label{thm_GQMLEs}
	We have $D_n(\hat{\theta}_{n}^{(1,i)}-\tes) = o_p(1)$ for $i=1,2$, hence in particular,
	the asymptotic normality
	\begin{align}
		D_n(\hat{\theta}_{n}^{(1,i)}-\tz) \cil N_3\left(0,\,\mci(\tz)^{-1}\right)
		\nonumber
	\end{align}
	and their Studentized versions
	\begin{equation}
		\mathcal{I}(\hat{\theta}_{n}^{(1,i)})^{1/2}D_n(\hat{\theta}_{n}^{(1,i)}-\tz) \cil N_3(0, I_3)
		\nonumber
	\end{equation}
	hold.
\end{theorem}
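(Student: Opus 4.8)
The plan is to reduce the whole statement to the single approximation $D_n(\hat{\theta}_{n}^{(1,i)}-\tes)=o_p(1)$, $i=1,2$; once this is in hand, the asymptotic normality and its Studentized version follow from Lemma \ref{hm:lem_AN.GQMLE} by Slutsky's lemma. The key observation is that the two updates differ only in the (random) weight matrix multiplying the score: writing $\mci_n(\theta)=-D_n^{-1}\p_\theta^2\mbbh_n(\theta)D_n^{-1}$, Newton--Raphson corresponds to the weight $D_n^{-1}\mci_n(\tesz)^{-1}D_n^{-1}$ and the scoring method to $D_n^{-1}\mci(\tesz)^{-1}D_n^{-1}$; so it suffices to treat a generic one-step estimator with weight $D_n^{-1}W_n^{-1}D_n^{-1}$ where $W_n$ equals $\mci_n(\tesz)$ or $\mci(\tesz)$, both of which we will show converge in probability to $\mci(\tz)$ and are invertible on an event of probability tending to one.

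Next I would work on the event $\{\p_\theta\mbbh_n(\tes)=0\}$, which has probability tending to one since $\tes\cip\tz\in\Theta$ (interior point) by the consistency established in the proof of Lemma \ref{hm:lem_AN.GQMLE}; there $\D_n(\tes)=D_n^{-1}\p_\theta\mbbh_n(\tes)=0$. A first-order Taylor expansion of $\theta\mapsto\p_\theta\mbbh_n(\theta)$ from $\tes$ to $\tesz$, pre- and post-multiplied by $D_n^{-1}$, gives $\D_n(\tesz)=\D_n(\tes)+\bigl(D_n^{-1}\p_\theta^2\mbbh_n(\tesz')D_n^{-1}\bigr)D_n(\tesz-\tes)=-\mci_n(\tesz')\,\desz$ for a random intermediate point $\tesz'$ on the segment joining $\tesz$ and $\tes$, where $\desz:=D_n(\tesz-\tes)=O_p(1)$ by Lemmas \ref{hm:lem_cLSE} and \ref{hm:lem_AN.GQMLE}. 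Substituting into the definitions \eqref{onestep-NR}--\eqref{onestep-scoring} yields
\[
D_n(\hat{\theta}_{n}^{(1,i)}-\tes)=\desz+W_n^{-1}\D_n(\tesz)=\bigl(I_3-W_n^{-1}\mci_n(\tesz')\bigr)\desz .
\]
To conclude $o_p(1)$ it then remains to show $W_n^{-1}\mci_n(\tesz')\cip I_3$, for which I would combine: (i) $\tesz,\tesz'\cip\tz$, from the $D_n$-consistency in Lemmas \ref{hm:lem_cLSE} and \ref{hm:lem_AN.GQMLE}; (ii) (AN2) and the uniform bound $\sup_\theta|\p_\theta\mci_n(\theta)|=O_p(1)$ from the proof of (AN3), giving $\mci_n(\tesz')\cip\mci(\tz)$ and $\mci_n(\tesz)\cip\mci(\tz)$; (iii) continuity of $\theta\mapsto\mci(\theta)^{-1}$ on $\overline{\Theta}$ together with invertibility of $\mci(\tz)$, so that $\mci(\tesz)^{-1}\cip\mci(\tz)^{-1}$ and, on the event where $\mci_n(\tesz)$ is invertible (probability $\to1$), also $\mci_n(\tesz)^{-1}\cip\mci(\tz)^{-1}$. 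In either case $W_n^{-1}\cip\mci(\tz)^{-1}$, hence $W_n^{-1}\mci_n(\tesz')\cip I_3$ and $D_n(\hat{\theta}_{n}^{(1,i)}-\tes)=o_p(1)$.

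From here the rest is immediate. Writing $D_n(\hat{\theta}_{n}^{(1,i)}-\tz)=D_n(\hat{\theta}_{n}^{(1,i)}-\tes)+D_n(\tes-\tz)$ and applying Slutsky's lemma with Lemma \ref{hm:lem_AN.GQMLE} gives $D_n(\hat{\theta}_{n}^{(1,i)}-\tz)\cil N_3(0,\mci(\tz)^{-1})$. For the Studentized form, note $\hat{\theta}_{n}^{(1,i)}\cip\tz$ (a fortiori from $D_n$-consistency), so by continuity of $\theta\mapsto\mci(\theta)^{1/2}$ on $\overline{\Theta}$ we get $\mci(\hat{\theta}_{n}^{(1,i)})^{1/2}\cip\mci(\tz)^{1/2}$; another application of Slutsky's lemma and the continuous mapping theorem then yields $\mci(\hat{\theta}_{n}^{(1,i)})^{1/2}D_n(\hat{\theta}_{n}^{(1,i)}-\tz)\cil\mci(\tz)^{1/2}N_3(0,\mci(\tz)^{-1})=N_3(0,I_3)$.

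The only genuinely nontrivial point — and the main obstacle — is the uniform-in-$\theta$ control needed to handle the intermediate point $\tesz'$, namely $\sup_\theta|\p_\theta\mci_n(\theta)|=O_p(1)$, and, relatedly, the a.s.-eventual invertibility of $\p_\theta^2\mbbh_n(\tesz)$ so that $\hat{\theta}_{n}^{(1,1)}$ is well defined. Both, however, are exactly the estimates already harvested in the proofs of (AN2) and (AN3): continuing the partial differentiations of $\mbbh_n$ and invoking Lemma \ref{hm:lem_AKT} together with the $\mca_k$-class bounds \eqref{hm:A_property}--\eqref{hm:A_property2} (the standing assumption $2\al>5\gam$ in \eqref{hm:param.assumption} providing the needed integrability of the negative moments) delivers these bounds. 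Consequently the theorem is essentially a bookkeeping corollary of Lemmas \ref{hm:lem_cLSE}, \ref{hm:lem_AN.GQMLE} and the moment machinery behind (AN1)--(AN3), requiring no new analytic input.
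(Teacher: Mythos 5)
Your proposal is correct and follows essentially the same route as the paper: work on the event $\{\p_\theta\mbbh_n(\tes)=0\}$, Taylor-expand the score to get $\D_n(\tesz)=-\mci_n(\tesz')\desz$ with $\desz=D_n(\tesz-\tes)=O_p(1)$, write the one-step update as $(I_3-W_n^{-1}\mci_n(\tesz'))\desz$ using the bounds harvested from (AN1)--(AN3), and finish with Slutsky. The only (cosmetic) difference is that you unify Newton--Raphson and scoring through a generic weight $W_n$, whereas the paper treats the first explicitly and calls the second analogous.
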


We end this section with several remarks.

\begin{remark}
	\label{rem_GQMLEs-1}
	\begin{enumerate}
		\item The previous study \cite{AlaKebTra20} derived the local asymptotic normality for $(\al,\beta)$, assuming that the diffusion parameter $\gam$ is known:
		\cite[Theorem 1]{AlaKebTra20} says that, if $\tz\in\Theta$ further satisfies that
		\begin{equation}
			\frac{2\al_0}{\gam_0} > \frac{11+\sqrt{89}}{2} \approx 10.217,
			\nonumber
		\end{equation}
		then any regular estimator $(\aes^\star,\bes^\star)$ such that
		\begin{equation}
			\left(\sqrt{T_n}(\aes^\star -\al_0),\,\sqrt{T_n}(\bes^\star-\beta_0)\right)
			\cil N_2\left(0,\,
			\left\{
			\frac{1}{\gam_0}
			\begin{pmatrix}
				\frac{\beta_0}{\al_0-\gam_0/2} & -1 \\ -1 & \frac{\al_0}{\beta_0}
			\end{pmatrix}
			\right\}^{-1}
			\right)
			\nonumber
		\end{equation}
		is asymptotically efficient in the sense of Haj\'{e}k-Le Cam.
Theorem \ref{thm_GQMLEs} suggests that the asymptotic optimality of the GQMLE would remain valid under a weaker restriction on the parameter space; recall \eqref{hm:param.assumption}.
On the one hand, the optimality of the GMQLE may seem unnatural since the small-time Gaussian approximation is quite inappropriate for the non-central chi-square distribution. On the other hand, it is natural since the driving noise is Gaussian, so that the model would be approximately Gaussian in small time. Note that this observation is just an intuition, and does not theoretically run counter to anything.
		
		\item It is well-known in the literature that high-frequency data over any bounded domain, say $[0,T]$ for fixed $T>0$ (namely $T_n\equiv T$), is enough to consistently estimate $\gam$ even when the diffusion coefficient is not uniformly elliptic: see the seminal paper \cite{genon1993} for details. 
		We may set $t_j=Tj/n$ for a fixed $T>0$, and then the modified quasi-log likelihood
		\begin{equation}
			\gam \mapsto \sumj \log \phi\left( X_{t_j};\, 0,\, h\gam X_{t_{j-1}}
			\right),
			\nonumber
		\end{equation}
		where the drift coefficient is completely ignored, would provide us with an asymptotically normally distributed estimator of $\gam$ with convergence rate being $\sqrt{n}$.
		In this case, the drift coefficient may be regarded as an infinite-dimensional nuisance element, while we cannot consistently estimate it in theory.
		
		\item Since we know beforehand that $(\aes,\bes)$ and $\ges$ are asymptotically independent, we may replace $\p_{\theta}^2\mbbh_n(\tesz)$ in \eqref{onestep-NR} by the simpler block-diagonal form
		\begin{equation}
			\mathrm{diag}\left(\p_{\rho}^2\mbbh_n(\tesz),\,\p_{\gam}^2\mbbh_n(\tesz)\right).
			\nonumber
		\end{equation}
		
	\end{enumerate}
\end{remark}

\section{Numerical experiments}
\label{nh:sec_simulations}

In this section, we compare simulation results of the initial estimator \eqref{hm:aesz_def}, \eqref{hm:besz_def}, \eqref{hm:gesz_def} with the one step estimators based on the Newton-Raphson \eqref{onestep-NR} and the scoring method \eqref{onestep-scoring}. We use exact CIR simulator for $(X_{t_j})_{j=1,\dots,n}$ through non-central chi-squares \cite{MalWie13}. The 1.000 simulated estimators are performed for $n=5.000, 10.000, 20.000$ and $T=T_n=500, 1.000, 2.000$. So that the condition $\frac{2\al}{\gam}>5$ is fulfilled, we choose as true value $\theta_0=(\al_0,\beta_0,\gam_0)=(3,1,1)$. Table \ref{table1} summarizes the mean and standard deviation (sd) of these estimators. As a comparison for the behavior at smaller $T$ and $n$, we contrast boxplots for $T=500, n=5.000$ and $T=50, n=200$ (Figures \ref{fig1} to \ref{fig3}). Additionally, the corresponding histograms are given in Figure \ref{fig4} and \ref{fig5}.
 
\begin{table}
	\caption{The mean and the standard deviation (sd) of the estimators with true values $(\al_0,\beta_0,\gam_0)=(3,1,1)$.} 
	\label{table1}
	{\tiny 
	\begin{align*}
		\begin{array}{cccccccccc}
			\hline
			n=5.000& & T=500 & & &T=1.000& & &T=2.000 &\\
			\textrm{initial}& \hat{\al}_{0,n} &\hat{\beta}_{0,n} &\hat{\gam}_{0,n}& \hat{\al}_{0,n} &\hat{\beta}_{0,n} &\hat{\gam}_{0,n}& \hat{\al}_{0,n} &\hat{\beta}_{0,n} &\hat{\gam}_{0,n}\\
			\hline
			\textrm{mean}& 3.0188 & 1.0079 & 0.9998 & 3.0200 & 1.0072 & 1.0023& 3.0037 &1.0015 & 1.0006\\
			\textrm{sd}&0.2111 & 0.0752 & 0.0211 & 0.1570 & 0.0552 & 0.0235 & 0.1216 & 0.0421 & 0.0254\\
			\hline	\hline
			\textrm{Newton} & \hat{\al}_{1,n} &\hat{\beta}_{1,n} &\hat{\gam}_{1,n}& \hat{\al}_{1,n} &\hat{\beta}_{1,n} &\hat{\gam}_{1,n}& \hat{\al}_{1,n} &\hat{\beta}_{1,n} &\hat{\gam}_{1,n}\\
			\hline 
			\textrm{mean}&3.0141 &1.0064 &1.0001 & 3.0197 & 1.0071 & 1.0025 & 3.0027 & 1.0012 & 1.0010\\
			\textrm{sd} & 0.1872 & 0.0681 & 0.0220 & 0.1435 & 0.0515 & 0.0248 & 0.1156 & 0.0403 & 0.0270\\
			\hline \hline
			\textrm{scoring} & \hat{\al}_{2,n} &\hat{\beta}_{2,n} &\hat{\gam}_{2,n}& \hat{\al}_{2,n} &\hat{\beta}_{2,n} &\hat{\gam}_{2,n}& \hat{\al}_{2,n} &\hat{\beta}_{2,n} &\hat{\gam}_{2,n}\\
			\hline
			\textrm{mean} & 3.0105 & 1.0052 & 0.9998 & 3.0182 & 1.0066 & 1.0023 & 3.0017 & 1.0008 & 1.0006\\
			\textrm{sd }& 0.1877 & 0.0682 & 0.0211 & 0.1434 & 0.0514 & 0.0235 & 0.1152 & 0.0400 & 0.0255 \\
			\hline \hline
			n=10.000 & &&&&&&&&\\ 
			\textrm{initial}& \hat{\al}_{0,n} &\hat{\beta}_{0,n} &\hat{\gam}_{0,n}& \hat{\al}_{0,n} &\hat{\beta}_{0,n} &\hat{\gam}_{0,n}& \hat{\al}_{0,n} &\hat{\beta}_{0,n} &\hat{\gam}_{0,n}\\ 
			\hline
			\textrm{mean} & 3.0351 & 1.0129 & 1.0001 & 3.0152 & 1.0060 & 1.0003 & 3.0097 & 1.0026 & 1.0012 \\
			\textrm{sd} & 0.2143 & 0.0741 & 0.0145 & 0.1519 & 0.0541 & 0.0150 & 0.1135 & 0.0404 & 0.0167 \\
			\hline \hline
			\textrm{Newton} & \hat{\al}_{1,n} &\hat{\beta}_{1,n} &\hat{\gam}_{1,n}& \hat{\al}_{1,n} &\hat{\beta}_{1,n} &\hat{\gam}_{1,n}& \hat{\al}_{1,n} &\hat{\beta}_{1,n} &\hat{\gam}_{1,n}\\
			\hline 
			\textrm{mean} & 3.0257 & 1.0099 & 1.0004 & 3.0121 & 1.0050 & 1.0005 & 3.0097 & 1.0026 & 1.0013 \\
			\textrm{sd} & 0.1849 & 0.0654 & 0.0150 & 0.1319 & 0.0476 & 0.0156 & 0.1030 & 0.0372 & 0.0175 \\
			\hline \hline 
			\textrm{scoring} & \hat{\al}_{2,n} &\hat{\beta}_{2,n} &\hat{\gam}_{2,n}& \hat{\al}_{2,n} &\hat{\beta}_{2,n} &\hat{\gam}_{2,n}& \hat{\al}_{2,n} &\hat{\beta}_{2,n} &\hat{\gam}_{2,n}\\ 
			\hline
			\textrm{mean} & 3.0219 & 1.0085 & 1.0001 & 3.0101 & 1.0043 & 1.0003 & 3.0089 & 1.0023 & 1.0012 \\
			\textrm{sd} & 0.1849 & 0.0654 & 0.0145 & 0.1320 & 0.0475 & 0.0150 & 0.1028 & 0.0371 & 0.0167\\
			\hline \hline	
			n=20.000& & & & && & &&\\
			\textrm{initial}& \hat{\al}_{0,n} &\hat{\beta}_{0,n} &\hat{\gam}_{0,n}& \hat{\al}_{0,n} &\hat{\beta}_{0,n} &\hat{\gam}_{0,n}& \hat{\al}_{0,n} &\hat{\beta}_{0,n} &\hat{\gam}_{0,n}\\
			\hline 
			\textrm{mean} & 3.0394 & 1.0140 & 1.0008 & 3.0102 & 1.0039 & 0.9996 & 3.0060 & 1.0023 & 1.0001\\
			\textrm{sd} & 0.2010 & 0.0717 & 0.0105 & 0.1464 & 0.0518 & 0.0104 & 0.1071 & 0.0374 & 0.0109 \\
			\hline \hline	
			\textrm{Newton} & \hat{\al}_{1,n} &\hat{\beta}_{1,n} &\hat{\gam}_{1,n}& \hat{\al}_{1,n} &\hat{\beta}_{1,n} &\hat{\gam}_{1,n}& \hat{\al}_{1,n} &\hat{\beta}_{1,n} &\hat{\gam}_{1,n}\\
			\hline 
			\textrm{mean} & 3.0263 & 1.0097 & 1.0010 & 3.0120 & 1.0045 & 0.9996 & 3.0043 & 1.0018 & 1.0002 \\
			\textrm{sd} & 0.1756 & 0.0635 & 0.0106 & 0.1269 & 0.0460 & 0.0106 & 0.0946 & 0.0334 & 0.0112\\
			\hline	\hline
			\textrm{scoring} & \hat{\al}_{2,n} &\hat{\beta}_{2,n} &\hat{\gam}_{2,n}& \hat{\al}_{2,n} &\hat{\beta}_{2,n} &\hat{\gam}_{2,n}& \hat{\al}_{2,n} &\hat{\beta}_{2,n} &\hat{\gam}_{2,n}\\ 
			\hline
			\textrm{mean} & 3.0223 & 1.0083 & 1.0008 & 3.0101 & 1.0038 & 0.9996 & 3.0034 & 1.0015 & 1.0002\\
			\textrm{sd} & 0.1759 & 0.0636 & 0.0105 & 0.1270 & 0.0460 & 0.0104 & 0.0946 & 0.0333 & 0.0109\\
			\hline	
		\end{array}
	\end{align*}
	}
\end{table}


\begin{figure}[h]
	\begin{multicols}{2}
		\includegraphics[width=0.49\textwidth,trim=0pt 30pt 0pt 30pt,clip ]{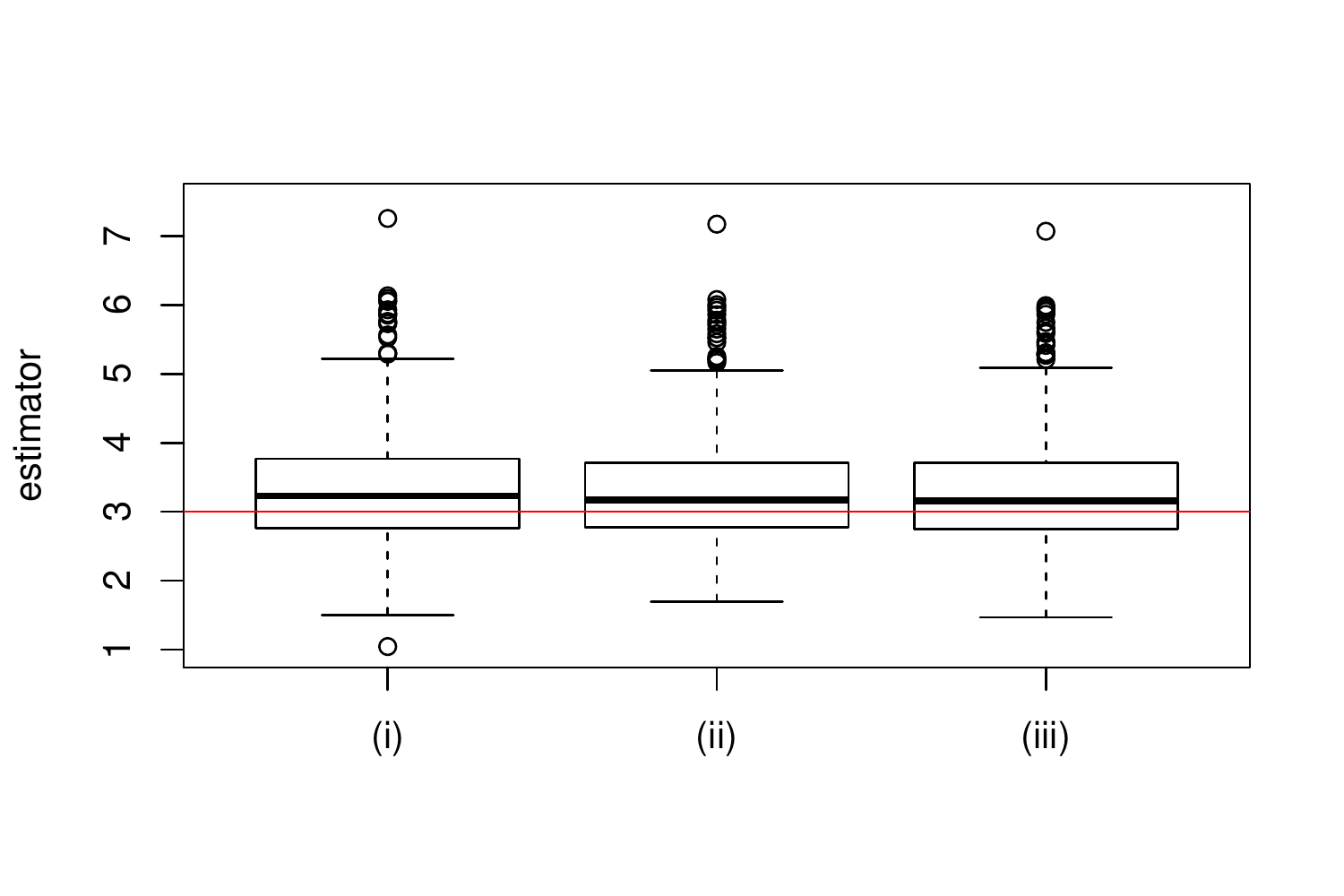}
		\newpage
		\includegraphics[width=0.49\textwidth,trim=0pt 30pt 0pt 30pt,clip]{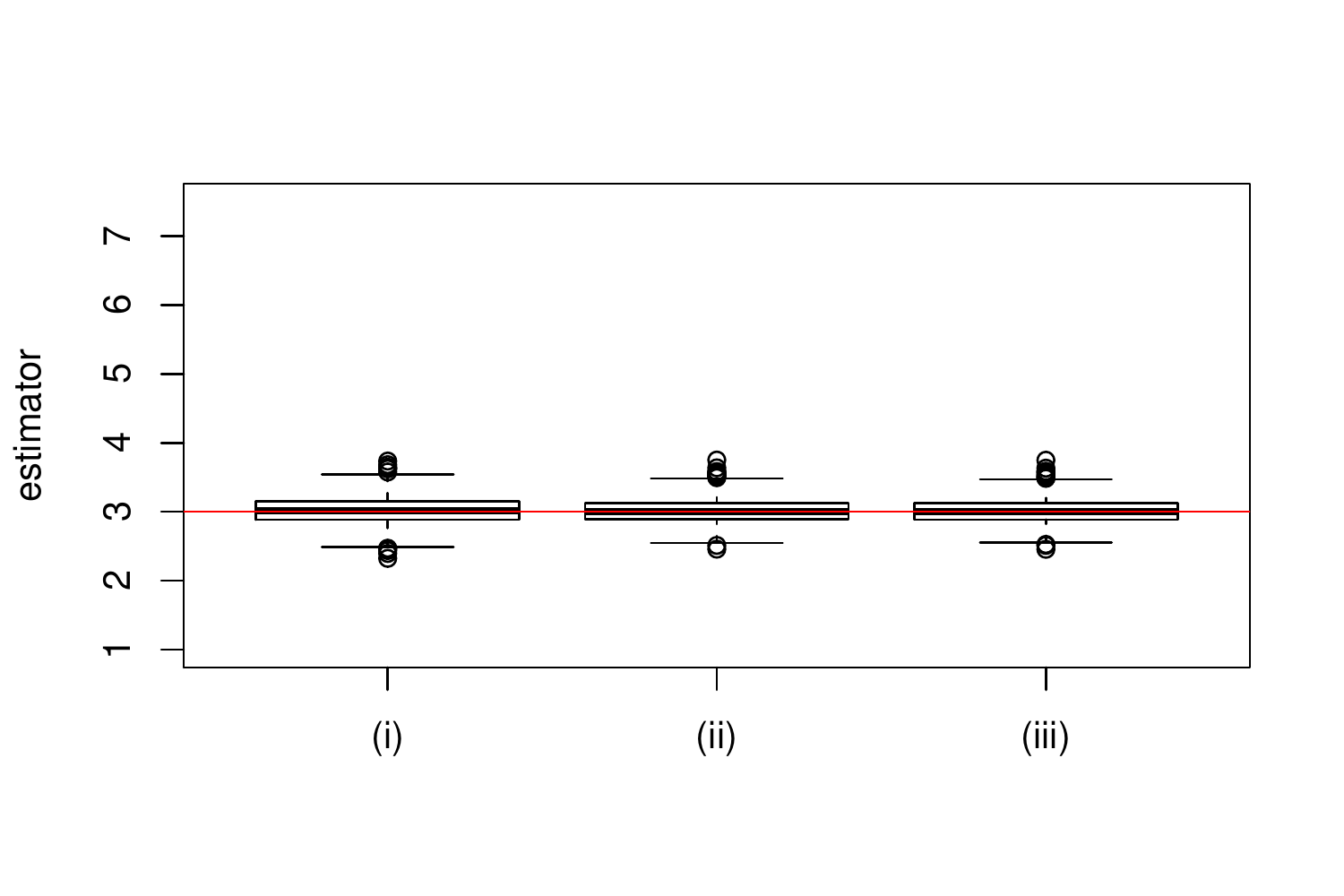}
	\end{multicols}
	\caption{Results of estimating $\alpha_0$ for $T=50, n=200$ (left) and $T=500, n=5.000$ (right): (i) initial estimator, (ii) Newton-Raphson method, (iii) scoring method. The red line indicates the true value.}
	\label{fig1}
\end{figure}

\begin{figure}[h]
	\begin{multicols}{2}
		\includegraphics[width=0.49\textwidth,trim=0pt 30pt 0pt 30pt,clip ]{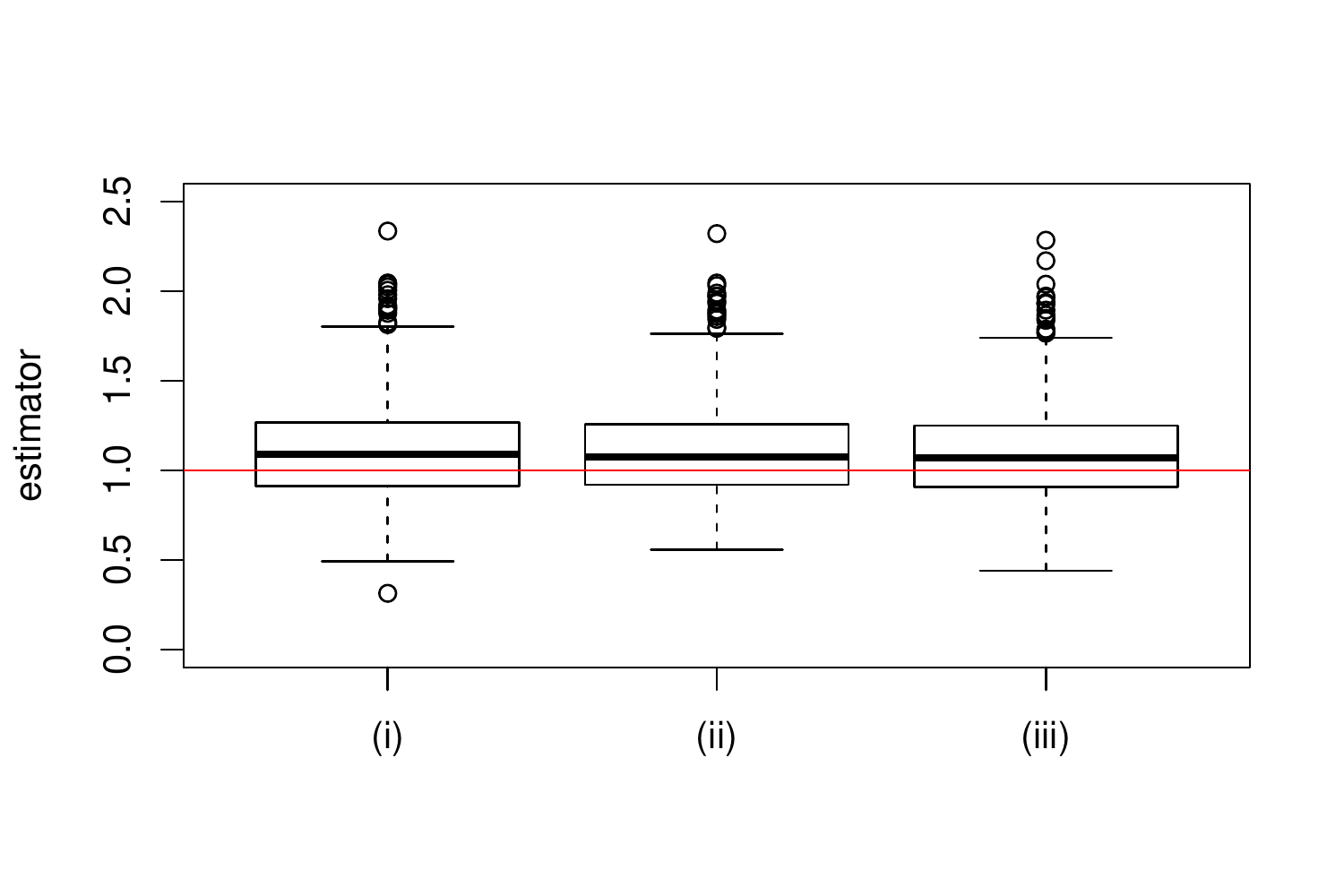}
		\newpage
		\includegraphics[width=0.49\textwidth,trim=0pt 30pt 0pt 30pt,clip]{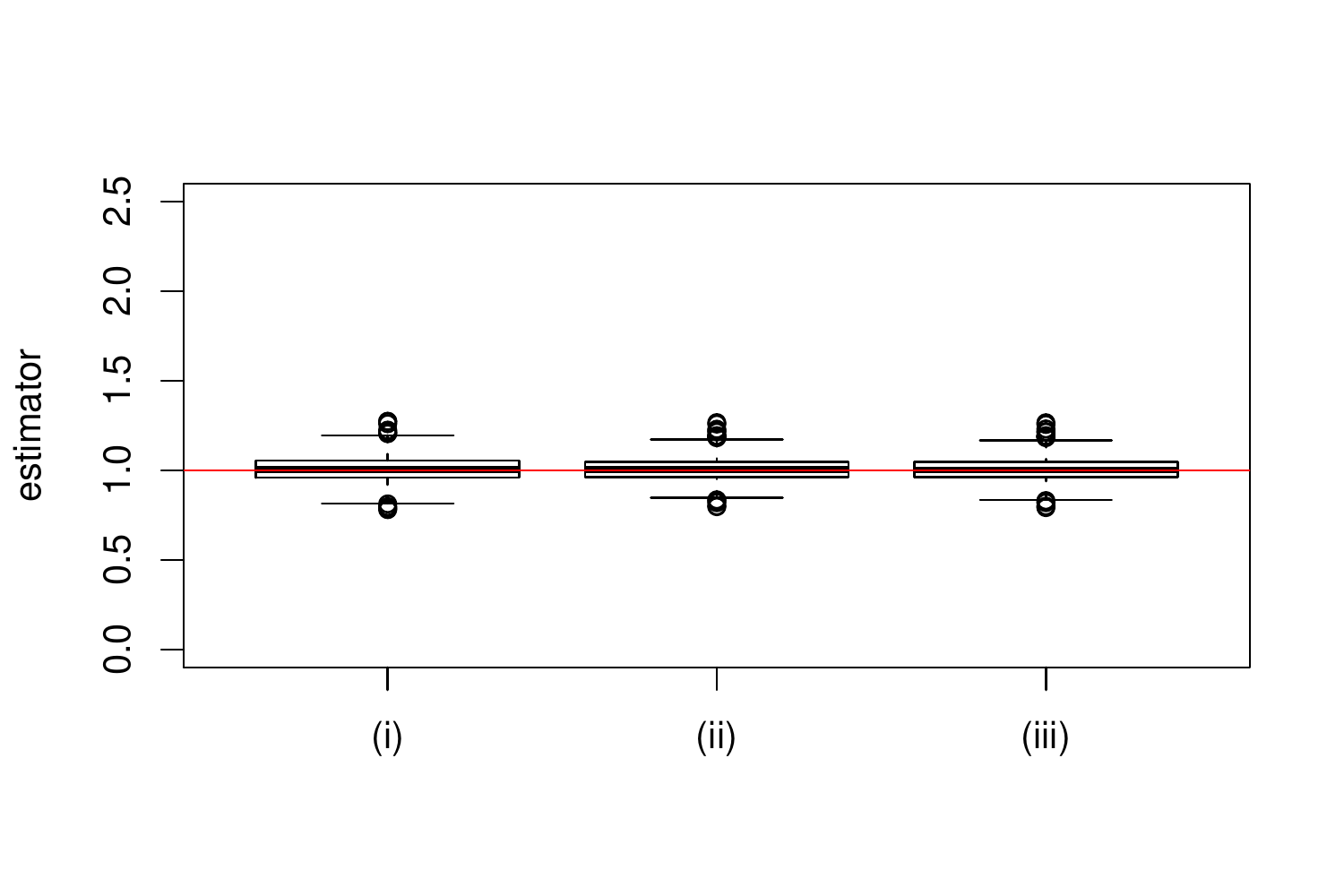}
	\end{multicols}
	\caption{Results of estimating $\beta_0$ for $T=50, n=200$ (left) and $T=500, n=5.000$ (right): (i) initial estimator, (ii) Newton-Raphson method, (iii) scoring method. The red line indicates the true value.}
	\label{fig2}
\end{figure}

\begin{figure}[h]
	\begin{multicols}{2}
		\includegraphics[width=0.49\textwidth,trim=0pt 30pt 0pt 30pt,clip ]{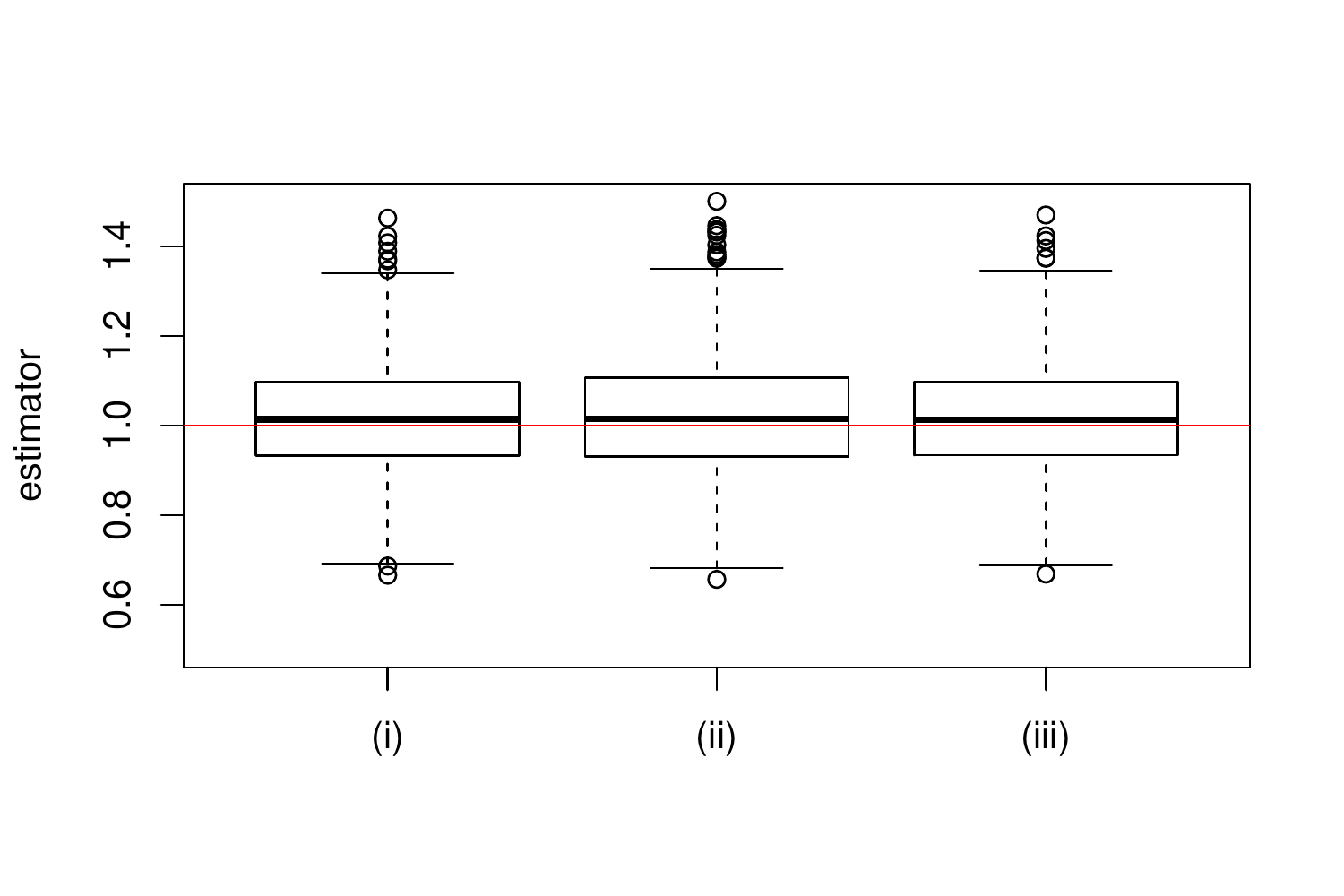}
		\newpage
		\includegraphics[width=0.49\textwidth,trim=0pt 30pt 0pt 30pt,clip]{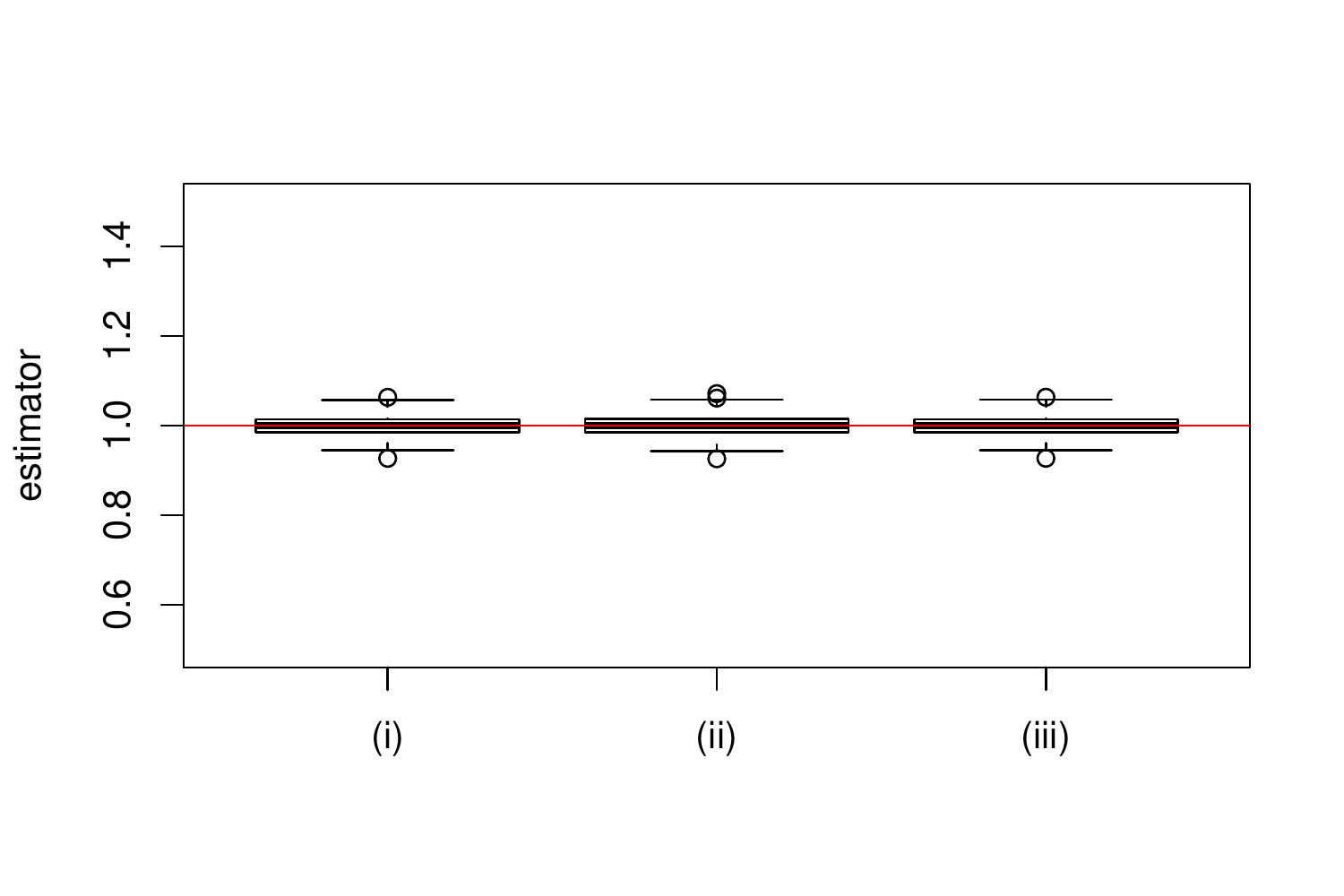}
	\end{multicols}
	\caption{Results of estimating $\gamma_0$ for $T=50, n=200$ (left) and $T=500, n=5.000$ (right): (i) initial estimator, (ii) Newton-Raphson method, (iii) scoring method. The red line indicates the true value.}
	\label{fig3}
\end{figure}

\if0 
\begin{figure}[H]
	\centering
	\begin{subfigure}{0.495\textwidth}
		\centering
		\includegraphics[width=\textwidth,trim=0pt 30pt 0pt 30pt,clip ]{T=50,n=200_alpha.pdf}
	\end{subfigure}
	\hfill
	\begin{subfigure}{0.495\textwidth}
		\centering
		\includegraphics[width=\textwidth,trim=0pt 30pt 0pt 30pt,clip]{T=500,n=5000_alpha_2.pdf}
	\end{subfigure}
	\caption{Results of estimating $\alpha_0$ for $T=50, n=200$ (left) and $T=500, n=5.000$ (right): (i) initial estimator, (ii) Newton-Raphson method, (iii) scoring method. The red line indicates the true value.}
\end{figure}

\begin{figure}[H]
	\centering
	\begin{subfigure}{0.495\textwidth}
		\centering
		\includegraphics[width=\textwidth,trim=0pt 30pt 0pt 30pt,clip ]{T=50,n=200_beta.pdf}
	\end{subfigure}
	\hfill
	\begin{subfigure}{0.495\textwidth}
		\centering
		\includegraphics[width=\textwidth,trim=0pt 30pt 0pt 30pt,clip]{T=500,n=5000_beta_2.pdf}
	\end{subfigure}
	\caption{Results of estimating $\beta_0$ for $T=50, n=200$ (left) and $T=500, n=5.000$ (right): (i) initial estimator, (ii) Newton-Raphson method, (iii) scoring method. The red line indicates the true value.}
\end{figure}

\begin{figure}[H]
	\centering
	\begin{subfigure}{0.495\textwidth}
		\centering
		\includegraphics[width=\textwidth,trim=0pt 30pt 0pt 30pt,clip ]{T=50,n=200_gamma.pdf}
	\end{subfigure}
	\hfill
	\begin{subfigure}{0.495\textwidth}
		\centering
		\includegraphics[width=\textwidth,trim=0pt 30pt 0pt 30pt,clip]{T=500,n=5000_gamma_2.pdf}
	\end{subfigure}
	\caption{Results of estimating $\gamma_0$ for $T=50, n=200$ (left) and $T=500, n=5.000$ (right): (i) initial estimator, (ii) Newton-Raphson method, (iii) scoring method. The red line indicates the true value.}
\end{figure}

\fi 

\begin{figure}[H]
	\includegraphics[width=\textwidth,trim=0pt 0pt 0pt 0 pt]{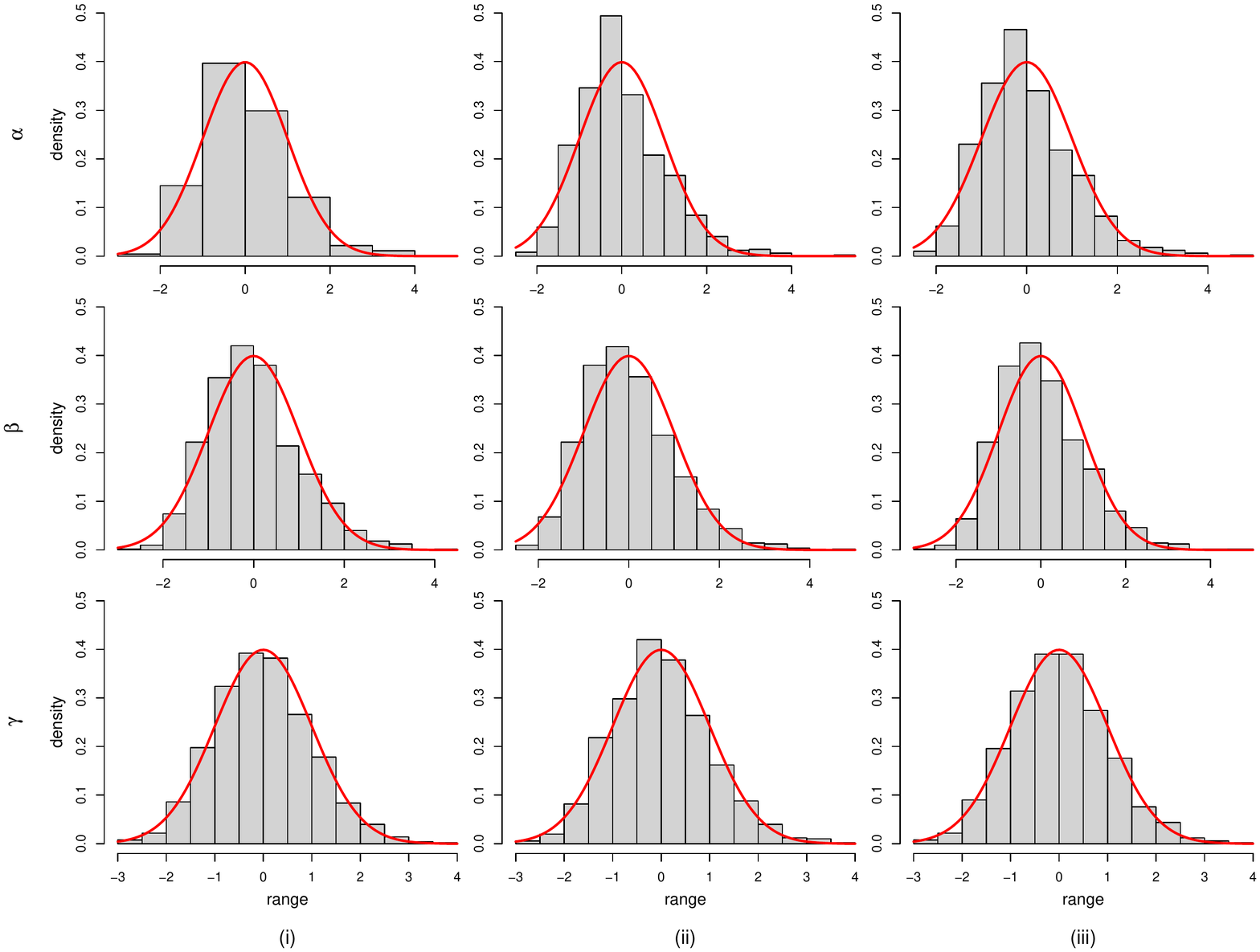}
	\caption{Histograms of the standardized estimators for $T=50, n=200$: (i) initial estimator, (ii) Newton-Raphson method, (iii) scoring method. The red curve indicates the density of the standard normal distribution. }
	\label{fig4}
\end{figure}

\begin{figure}[H]
	\includegraphics[width=\textwidth,trim=0pt 0pt 0pt 0 pt]{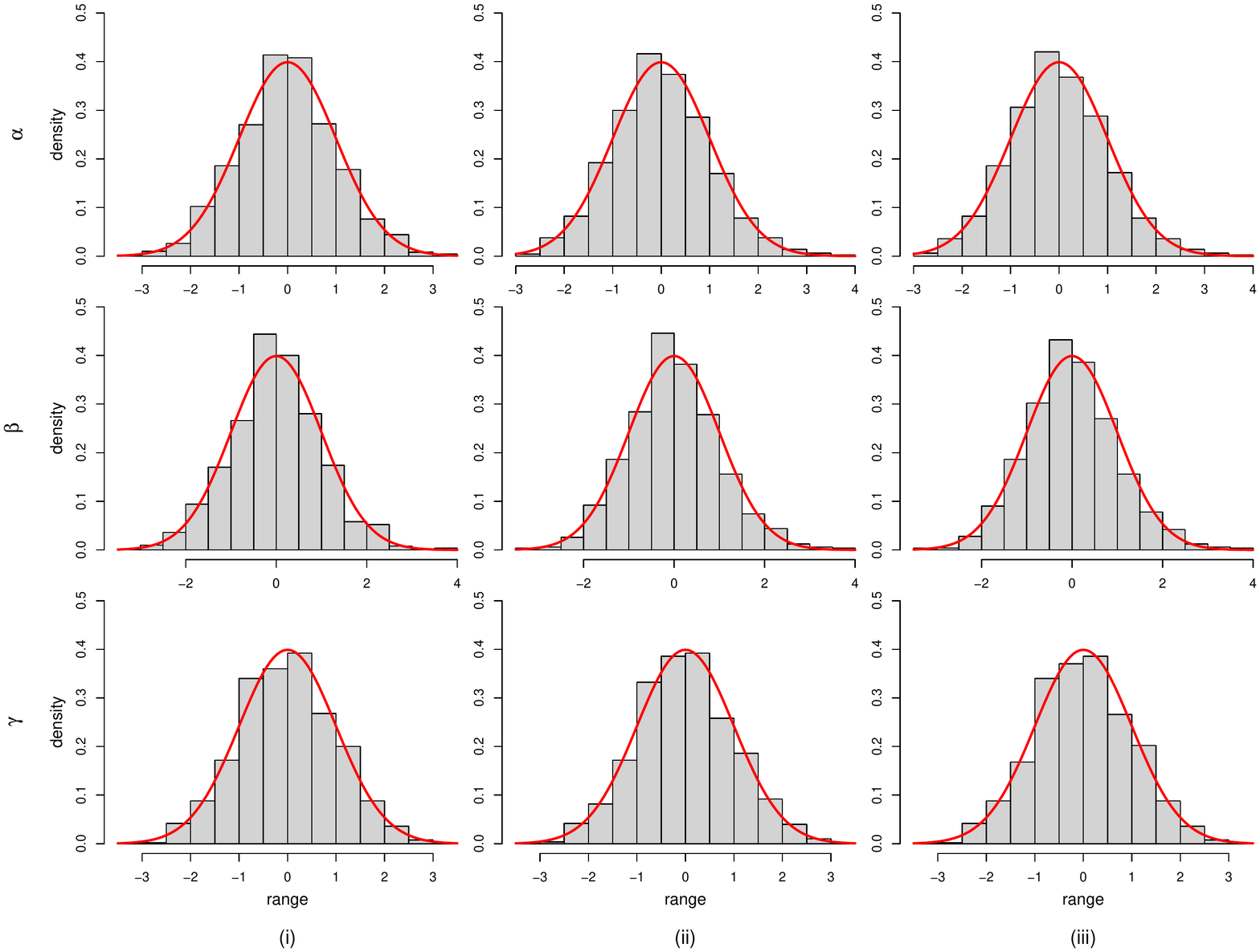}
	\caption{Histograms of the standardized estimators for $T=500, n=5.000$: (i) initial estimator, (ii) Newton-Raphson method, (iii) scoring method. The red curve indicates the density of the standard normal distribution. }
	\label{fig5}
\end{figure}

By means of Table \ref{table1} the performance of the three estimators behaves quite similar. Upon closer inspection, we recognize the smallest improvement in the estimates of $\gam_0$. The two estimators $\hat{\gam}_{0,n}$ and $\hat{\gam}_{2,n}$ have even the same values on four decimal points except for two deviations of $10^{-4}$. Comparing the estimators for $\al_0$ and $\beta_0$, we detect, with one exception (Newton-Raphson method for $T=1.000$ and $n=20.000$), a small improvement in the one step estimators compared to the initial estimator. Besides, the scoring method performs slightly better than the Newton-Raphson method. 

Overall, the three-estimators performances seem to be quite similar. This leads to the assumption that the initial estimator is already asymptotically optimal. The almost undetectable difference of the estimates of $\gam_0$ is due to the faster convergence rate $\sqrt{n}$ instead of $\sqrt{T_n}=\sqrt{T}$.

Since we choose especially large values for $n$ and $T$ in Table \ref{table1}, we can assume that the differences become more pronounced for smaller values. Therefore, comparing the boxplots for $T=50, n=200$ in Figures \ref{fig1} to \ref{fig3} the differences between the estimators are vanishingly small. This suggests that the initial estimator is already effective. The improvements in performance become visually apparent when comparing with the boxplots for $T=500, n=5.000$.

\bigskip

\noindent
{\bf Acknowledgement.} 
We are grateful to the two anonymous referees for their valuable comments which led to substantial improvements.
This work was partially supported by JST CREST Grant Number JPMJCR14D7, Japan (HM), and by DFG-GRK 2131, Germany (NH).
The contents are partly based on the master thesis of CY \cite{CY.thesis}.

\bigskip


\end{document}